
\documentclass[]{interact}

\usepackage[numbers,sort&compress]{natbib}
\bibpunct[, ]{[}{]}{,}{n}{,}{,}
\makeatletter
\def\NAT@def@citea{\def\@citea{\NAT@separator}}
\makeatother

\theoremstyle{plain}
\newtheorem{theorem}{Theorem}[section]
\newtheorem{lemma}[theorem]{Lemma}
\newtheorem{corollary}[theorem]{Corollary}
\newtheorem{proposition}[theorem]{Proposition}

\theoremstyle{definition}
\newtheorem{definition}[theorem]{Definition}
\newtheorem{example}[theorem]{Example}

\theoremstyle{remark}
\newtheorem{remark}{Remark}


\newcommand{\R}{\mathbb R}

\newcommand{\X}{\mathbb X}
\newcommand{\Y}{\mathbb Y}
\newcommand{\Uball}{{\mathbb B}}
\newcommand{\Usfer}{{\mathbb S}}

\newcommand{\dom}{{\rm dom}\, }
\newcommand{\graph}{{\rm graph}\,}

\newcommand{\nullv}{\mathbf{0}}
\newcommand{\Mat}{\mathbf{M}}    
\newcommand{\Rot}{\mathbf{S}\mathbf{O}}    

\newcommand{\bd}{{\rm bd}\, }
\newcommand{\inte}{{\rm int}\, }

\newcommand{\SVI}{\tt{SVI}}    
\newcommand{\CSVI}{\tt{CSVI}}    
\newcommand{\VOP}{\tt{VOP}}    
\newcommand{\Cmap}{\mathcal{C}}
\newcommand{\Fmap}{\mathcal{F}}
\newcommand{\Gmap}{\mathcal{G}}
\newcommand{\Hmap}{\mathcal{H}}
\newcommand{\Smap}{\mathcal{S}}
\newcommand{\Rmap}{\mathcal{R}}
\newcommand{\CSmap}{\widetilde{\mathcal{S}}}
\newcommand{\IEmap}{\mathcal{IE}}    
\newcommand{\val}{{\rm val}}    
\newcommand{\ief}{{\rm ie}}    
\newcommand{\VOPmap}{\mathcal{F}_{f,\Rmap}}

\newcommand{\impf}{{\rm s}}   
\newcommand{\mef}{\nu}   
\newcommand{\cimpf}{\widetilde{\rm s}}   
\newcommand{\cmef}{\zeta}   
\newcommand{\Carcond}{\mathfrak{C}}   
\newcommand{\sur}{{\rm sur}\,}

\newcommand{\ball}[2]{{\rm B}\left[#1; #2\right]}    

\newcommand{\dist}[2]{{\rm dist}\left(#1,#2\right)}     
\newcommand{\exc}[2]{{\rm exc}\left(#1;#2\right)}   
\newcommand{\haus}[2]{{\rm haus}\left(#1;#2\right)}    

\newcommand{\inc}[3]{{\rm inc}(#1;#2;#3)}   
\newcommand{\dec}[3]{{\rm dec}(#1;#2;#3)}   


\begin{document}


\title{On some global implicit function theorems for set-valued inclusions with applications
to parametric vector optimization}

\author{
\name{A. Uderzo\textsuperscript{a}\thanks{CONTACT A. Uderzo. Email: amos.uderzo@unimib.it}}
\affil{\textsuperscript{a} Department of Mathematics and its Applications,
University of Milano-Bicocca, Milano, Italy}
}

\maketitle

\begin{abstract}
The present paper deals with the perturbation analysis of set-valued
inclusion problems, a problem format whose relevance has recently emerged
in such contexts as
robust and vector optimization as well as in vector equilibrium theory.
The set-valued inclusions here considered are parameterized by variables
belonging to a topological space, with and without constraints.
By proper techniques of variational analysis, some qualitative global
implicit function theorems are established, which ensure global solvability
of these problems and continuous dependence on the
parameter of the related solutions. Applications to parametric vector
optimization are discussed, aimed at deriving sufficient conditions
for the existence of ideal efficient solutions that depend
continuously on the parameter perturbations.
\end{abstract}

\begin{keywords}
Set-valued inclusion problem, Caristi-type condition, implicit function,
parametric vector optimization, ideal efficient solution.
\end{keywords}

\section{Introduction}

Let $(P,\tau)$ be a topological space, let $(X,d)$ be a metric space,
and let $(\Y,\|\cdot\|)$ be a normed vector space.
Given two set-valued mappings $\Cmap:P\rightrightarrows\Y$ and
$\Fmap:P\times X\rightrightarrows\Y$, with the variable $p\in P$
playing the role of a parameter, consider the following class of
problems:
$$
  \hbox{find $x\in X$:\ } \qquad \Fmap(p,x)\subseteq\Cmap(p).  \leqno (\SVI_p)
$$
This is the parameterized version of a generalized equation type
referred to as set-valued inclusion (see \cite{Uder19,Uder21,Uder21b,Uder22}).
The relevance of such kind of problem format, that can not be
cast in traditional generalized equations, has emerged in various
topics of optimization theory. For instance,
in the context of robust optimization (see \cite{BenNem98,BeGhNe09}),
set-valued inclusions appear as a natural way of formalizing the robust
fulfilment of cone constraint systems, which are defined by data
affected by a crude knowledge of those uncertain elements often
arising in real-world models.
In the context of vector optimization, the notion of ideal and of
weak efficiency can be readily formulated in terms of set-valued
inclusions (see \cite{Uder23}), so the latter provide useful
insights into the study of efficiency conditions.
More generally, set-valued inclusions enable one to characterize
strong solutions to vector equilibrium problems, which
include vector variational inequalities and complementarity problems
(see \cite{Uder23b}).

Following a research line started in \cite{Uder21} and carried
on in \cite{Uder21b}, the present paper deals with the perturbation
analysis of parameterized set-valued inclusion problems. This
means the study of various properties of the solution mapping
$\Smap:P\rightrightarrows X$ associated with $(\SVI_p)$, i.e.
$$
  \Smap(p)=\{x\in X:\ \Fmap(p,x)\subseteq\Cmap(p)\},
$$
without relying on an explicit description of its values
(what would require to solve $(\SVI_p)$ for each $p\in P$).
Instead, such a study is based on directly handling the problem data,
in the full spirit of implicit function theorems.
More precisely, the investigations conducted in the present
paper focus on the solvability of $(\SVI_p)$ and the stability
of the solution set with respect to parameter perturbations.
Some properties connected with these themes within a perturbation analysis
have been investigated already in \cite{Uder21}. In comparison
with the researches exposed there, the main distinguishing features
of the present analysis are two. The parameter space here is a mere
topological space. In such a setting, the possible lack of a metric
space structure on $(P,\tau)$ does not permit to put the analysis
in the framework of well known theories developed in variational
analysis. Therefore, the perturbation analysis is forced to deal
with qualitative forms of stability of $\Smap$, instead of the
quantitative ones.
On the other hand, the emphasis of the analysis is here on
global properties of $\Smap$, instead of the local ones.
Nonlocal implicit function theorems for traditional equation
systems, under a topological space parametrization, have been
recently addressed also in \cite{ArZhMo23}, which inspired the
present research work.

It is well known that the perturbation analysis of various kinds
of problems is able to drive important developments in the deep
understanding of a problem in its basic (unperturbed) form.
In the specific case of variational analysis,
the perturbation approach to generalized
equations/variational systems had the effect of stimulating the
introduction of innovative ideas and successful techniques.

In order to adequate the original setting of parameterized
set-valued inclusions to a broad range of applications, the present
analysis consider also the constrained counterpart of $(\SVI_p)$,
whose statement is as follows:
given a further set-valued mapping $\Rmap:P\rightrightarrows X$,
$$
  \hbox{find }\ x\in\Rmap(p): \qquad \Fmap(p,x)\subseteq\Cmap(p).  \leqno (\CSVI_p)
$$
The solution mapping $\CSmap:P\rightrightarrows X$ associated with $(\CSVI_p)$
is consequently defined by
$$
  \CSmap(p)=\{x\in\Rmap(p):\ \Fmap(p,x)\subseteq\Cmap(p)\}.
$$

The synopsis of the paper is as follows. In Section \ref{Sect:2}
preliminary notions and facts, mainly concerning set-valued mappings
and their properties, are gathered, which are technically
needed for developing the approach here proposed.
Section \ref{Sect:3} contains the main findings of the paper.
Essentially, they are sufficient conditions for the existence of continuous functions
that are implicitly defined on the whole parameter space by
parameterized set-valued inclusions. These conditions, established
in two subsections, consider both
the problems $(\SVI_p)$ and $(\CSVI_p)$. Connections with related
existing results are also discussed.
A specific application of the emerging theory to the qualitative
analysis of ideal efficiency in vector optimization is presented
in Section \ref{Sect:4}.

The basic notations in use throughout the paper are standard.
$\R$ denotes the field of real numbers, $\R^n$ the space of
vectors with $n$ real components, $\R^n_+$ the nonnegative orthant
in $\R^n$.
Given a function $\varphi:X\longrightarrow\R\cup\{\pm\infty\}$,
$\dom\varphi=\varphi^{-1}(\R)$ denotes its domain and, if $\alpha\in\R$,
$[\varphi\le\alpha]=\{x\in X:\ \varphi(x)\le\alpha\}$ denotes its
sublevel set.
Whenever $(X,d)$ is a metric space and $S\subseteq X$, $\dist{x}{S}=
\inf_{z\in S}d(z,x)$ denotes that distance of a point $x\in X$ from
$S$, with the convention that $\dist{x}{\varnothing}=+\infty$.
Consistently, if $r\ge 0$, $\ball{S}{r}=[\dist{\cdot}{S}\le r]$
indicates the $r$-enlargement of $S$ with radius $r$. In particular, if
$S=\{x\}$, $\ball{x}{r}$ denotes the closed ball with center $x$
and radius $r$.
The symbol $\inte S$ and $\bd S$ indicate the topological interior
and the boundary of $S$, respectively.
Whenever $(\Y,\|\cdot\|)$ is a normed space, $\nullv$ stands for its
null vector. In this context, $\ball{\nullv}{1}$ and $\bd\ball{\nullv}{1}$
will be simply indicated by $\Uball$ and $\Usfer$, respectively.
Given a set-valued mapping $\Gmap:X\rightrightarrows Y$, $\dom\Gmap=
\{x\in X:\ \Gmap(x)\ne\varnothing\}$ and $\graph\Gmap=\{(x,y)\in X\times Y:
\ y\in\Gmap(x)\}$ denote the effective domain and the graph of $\Gmap$,
respectively.

The acronyms l.s.c. and u.s.c. stand for lower semicontinuous
and upper semicontinuous, respectively, for all their meanings.

The meaning of further symbols employed in subsequent sections
will be explained contextually to their introduction.

\vskip1cm


\section{Preliminaries} \label{Sect:2}

Throughout the paper the following standing assumptions
are maintained:

\begin{itemize}

\item[$(\mathfrak{A}_0)$] $(X,d)$ is metrically complete;

\item[$(\mathfrak{A}_1)$] $\Fmap(p,x)$ is nonempty and closed for every
$(p,x)\in P\times X$;

\item[$(\mathfrak{A}_2)$] $\{\nullv\}\ne\Cmap(p)\subsetneqq\Y$ is a closed,
convex cone for every $p\in P$.

\end{itemize}

Assumption $(\mathfrak{A}_0)$ seems to be hardly avoidable, whenever
existence issues are addressed via iterative schemes or variational
principles.
Assumption $(\mathfrak{A}_1)$ allows one to prevent the trivial situation
$\varnothing\subset\Cmap(p)$ and ensure general topological properties.
Assumption $(\mathfrak{A}_2)$ has to do with both the proposed
approach of analysis and subsequent applications.

Following a recent approach to the theory of implicit and
inverse functions and to the theory of fixed and coincidence points
(see \cite{Arut15,AruZhu20,ArZhMo23}), the existence of problem
solutions will be achieved by minimizing proper functions, which
are shown to satisfy a Caristi-type condition. Recall that
a function $\varphi:X\longrightarrow [0,+\infty]$ is said to satisfy
a Caristi-type condition if there exists a constant $\kappa>0$ such that
$$
  \forall x\in [\varphi>0]\ \exists\hat x\in X\backslash\{x\}:\
  \varphi(\hat x)+\kappa d(x,\hat x)\le\varphi(x). \leqno (\Carcond)
$$

\begin{remark}   \label{rem:Carcond}
It is well known that, whenever a function $\varphi:X\longrightarrow [0,+\infty]$,
which is l.s.c. on a complete metric space $(X,d)$ with $\dom\varphi\ne\varnothing$,
satisfies the condition $(\Carcond)$, then for every $x_0\in X$ there exists
$x_\kappa\in X$ such that
\begin{equation}    \label{in:BPVP}
  \varphi(x_\kappa)=0 \qquad\hbox { and }\qquad d(x_\kappa,x_0)
  \le  {\varphi(x_0)\over\kappa}.
\end{equation}
Such a statement is in fact an equivalent reformulation of the Bishop-Phelps/Ekeland variational
principle (see, for instance, \cite[Chapter I.2]{GraDug03} and \cite[Chapter 1.5]{Peno13}).
For the purposes of the present analysis, it is relevant to note that the inequality
in (\ref{in:BPVP}) entails
$$
  \dist{x_0}{[\varphi\le 0]}\le {\varphi(x_0)\over\kappa}.
$$
\end{remark}

As the values taken by $\Cmap$ are closed sets, it is convenient to represent
the solution mapping $\Smap$ associated with $(\SVI_p)$ by
$$
  \Smap(p)=[\mef(p,\cdot)\le 0],
$$
where $\mef:P\times X\longrightarrow [0,+\infty]$ is defined by
\begin{equation}     \label{eq:defmef}
  \mef(p,x)=\exc{\Fmap(p,x)}{\Cmap(p)}.
\end{equation}
Here $\exc{A}{B}=\sup_{a\in A}\dist{a}{B}$ stands for the (metric)
excess of a subset $A$ of a metric space beyond a subset $B$ of the same space.

Below, for a better readability of the present work, some known
facts related to the excess function are recalled, which will be
employed in the sequel. Let $\{\nullv\}\ne C\subsetneqq\Y$ be a closed, convex
cone. Then
\begin{itemize}

\item[$(\mathfrak{P}_1)$] for every $S\subseteq\Y$ it holds $\exc{S+C}{C}=
\exc{S}{C}$;

\item[$(\mathfrak{P}_2)$] for every $r>0$ and $S\subseteq\Y$ such that $\exc{S}{C}>0$,
it holds $\exc{\ball{S}{r}}{C}=\exc{S}{C}+r$

\end{itemize}
(see \cite[Remark 2.1(iv)]{Uder19} and \cite[Lemma 2.2]{Uder19}, respectively).

As it is clear from Remark \ref{rem:Carcond},
to develop the subsequent analysis semicontinuity properties of $\mef$
are needed. It is well known that semicontinuity properties of such functions
as $\mef$ can be derived by corresponding semicontinuity properties
of $\Fmap$. The next remark points out some useful implications.

\begin{remark}    \label{rem:scontmef}
Let $(\Y,\|\cdot\|)$ be a normed vector space and let $C\subseteq\Y$ a closed, convex
cone.

(i) If a set-valued mapping $\Gmap:X\rightrightarrows\Y$ defined on a metric space is
l.s.c. at $x_0\in\dom\Gmap$, then the function $x\mapsto\exc{\Gmap(x)}{C}$ is l.s.c.
at $x_0$ (see \cite[Lemma 2.3(i)]{Uder19}).

(ii) If a set-valued mapping $\Gmap:P\rightrightarrows\Y$ defined on a topological space is
Hausdorff $C$-u.s.c. at $p_0\in\dom\Gmap$, then the function $p\mapsto\exc{\Gmap(p)}{C}$
is u.s.c. at $p_0$.

To show this, it suffices to adapt the proof of \cite[Lemma 3.2(ii)]{Uder19} to the
more general setting of topological spaces. So, take an arbitrary $\epsilon>0$. Since
$\Gmap$ is Hausdorff $C$-u.s.c. at $p_0$, there exists a neighbourhood $U_\epsilon$
of $p_0$ such that
$$
   \Gmap(p)\subseteq \ball{\Gmap(p_0)+C}{\epsilon/2},\quad\forall p\in U_\epsilon.
$$
If it is $\Gmap(p_0)\subseteq C$, then by taking into account that for any
$\varnothing\ne S\subseteq\Y$ and $r>0$ the inclusion
$$
   \ball{S+C}{r}\subseteq\bigcap_{t>r}\left(\ball{S}{t}+C\right)
$$
holds and by using property $(\mathfrak{P}_1)$, one finds
\begin{eqnarray*}
  \exc{\Gmap(p)}{C} &\le& \sup_{y\in\ball{\Gmap(p_0)+C}{\epsilon/2}}\dist{y}{C}\le
    \sup_{y\in\ball{\Gmap(p_0)}{\epsilon}+C}\dist{y}{C} \\
    &=& \exc{\ball{\Gmap(p_0)}{\epsilon}+C}{C}=\exc{\ball{\Gmap(p_0)}{\epsilon}}{C}   \\
    &\le&   \sup_{y\in\ball{C}{\epsilon}}\dist{y}{C}=\epsilon,
   \quad\forall p\in U_\epsilon.
\end{eqnarray*}
Otherwise, on account of properties $(\mathfrak{P}_1)$
and $(\mathfrak{P}_2)$, where the latter can now be applied because $\exc{\Gmap(p_0)}{C}>0$
and hence it is $\exc{\Gmap(p_0)+C}{C}>0$, it holds
\begin{eqnarray*}
   \exc{\Gmap(p)}{C} &\le &\exc{\ball{\Gmap(p_0)+C}{\epsilon}}{C}=
   \exc{\Gmap(p_0)+C}{C}+\epsilon   \\
   &=& \exc{\Gmap(p_0)}{C}+\epsilon, \quad\forall p\in U_\epsilon.
\end{eqnarray*}

(iii) If a set-valued mapping $\Gmap:X\rightrightarrows\Y$ defined on a metric space is
Lipschitz continuous on $X$ with constant $\ell$, i.e. there exists $\ell>0$ such that
$$
  \haus{\Gmap(x_1)}{\Gmap(x_2)}\le\ell d(x_1,x_2),\quad\forall x_1,\, x_2\in X,
$$
where $\haus{A}{B}=\max\{\exc{A}{B},\, \exc{B}{A}\}$,
then the function $p\mapsto\exc{\Gmap(p)}{C}$ is Lipschitz continuous on $X$ with
the same constant.
This fact comes as an immediate consequence of the inequality
$\exc{A}{C}\le \exc{A}{B}+\exc{B}{C}$, which is valid for any triple
$A$, $B$, and $C$ of nonempty subsets of a metric space.
\end{remark}

\begin{remark}   \label{rem:uscdistp}
If a set-valued mapping $\Gmap:P\rightrightarrows X$, defined on a topological
space with values in a metric space, is l.s.c. at $p_0\in P$, then for every $x\in X$
the function $p\mapsto\dist{x}{\Gmap(p)}$ is u.s.c. at the same point
(see, for instance, \cite[Proposition 1.45]{Thib23}).
\end{remark}


As a next tool of analysis, a property for set-valued mappings, which are
defined in a metric space and take values in a normed vector space
partially ordered by a cone, is recalled. Such a property has revealed to play
a crucial role in the study of the solvability of set-valued
inclusion problems (see \cite{Uder19,Uder22}).

\begin{definition}[{\bf $C$-increase property}]    \label{def:Cincrpropdef}
Let $\Gmap:X\rightrightarrows\Y$ be a set-valued
mapping, let $\{\nullv\}\ne C\subsetneqq\Y$ be a closed, convex cone, and let
$x_0\in X$. $\Gmap$ is said to be {\it (metrically) $C$-increasing} at
$x_0\in\dom\Gmap$ if there
exist $\alpha>1$ and $\delta>0$ such that
\begin{equation}    \label{def:Cincrprop}
   \forall r\in (0,\delta]\ \exists u\in\ball{x_0}{r}:\
   \ball{\Gmap(u)}{\alpha r}\subseteq \ball{\Gmap(x_0)+C}{r}.
\end{equation}
The value
\begin{equation}    \label{def:Cincrbxbo}
   \inc{\Gmap}{C}{x_0}=\sup\{\alpha>1:\ \exists\delta>0 \hbox{ for which (\ref{def:Cincrprop})
   holds\,}\}
\end{equation}
is called {\it exact bound of $C$-increase} of $\Gmap$ at $x_0$. If $\Gmap$
is $C$-increasing at each $x_0\in X$, then it is said to be $C$-increasing on $X$.
\end{definition}

\begin{remark}   \label{rem:unexCincrprop}
In view of subsequent arguments, it is useful to observe that, whenever
a set-valued mapping $\Gmap$ is $C$-increasing at $x_0$ while it is
$\Gmap(x_0)\not\subseteq C$, then, if $\alpha$ and $\delta$ are as in Definition
\ref{def:Cincrpropdef}, for every $r\in (0,\delta]$ the inclusion in
(\ref{def:Cincrprop}) must be necessarily satisfied by $u\in\ball{x_0}{r}
\backslash\{x_0\}$. Indeed, if it were $u=x_0$, one would obtain the inclusion
$$
  \ball{\Gmap(x_0)}{\alpha r}\subseteq \ball{\Gmap(x_0)+C}{r},
$$
which, by taking into account that $\exc{\Gmap(x_0)}{C}>0$, can never hold
true. This because, by the properties $(\mathfrak{P}_1)$ and $(\mathfrak{P}_2)$,
it would yield
\begin{eqnarray*}
  \exc{\Gmap(x_0)}{C}+\alpha r &=& \exc{\ball{\Gmap(x_0)}{\alpha r}}{C} \\
   &\le & \exc{\ball{\Gmap(x_0)+C}{r}}{C}=\exc{\Gmap(x_0)+C}{C}+r \\
   &=& \exc{\Gmap(x_0)}{C}+r,
\end{eqnarray*}
which implies $\alpha\le 1$, in contradiction with $\alpha>1$, as required in
Definition \ref{def:Cincrpropdef}.
\end{remark}

\begin{example}(Rescaled rotations of $\R^n$)    \label{ex:rescarot}
For any $n\ge 2$, let $X=\Y=\R^n$ be equipped with its usual Euclidean space
structure, and let $C=\R^n_+$. Let $(\Rot(n),\circ)$ denote the group of all rotations
(a.k.a. special orthogonal group) of $\R^n$. For every real $\lambda>n$, the
rescaled rotation mapping $\lambda O:\R^n\longrightarrow\R^n$, with $O\in\Rot(n)$,
is $\R^n_+$-increasing at each point $x\in\R^n$, with
\begin{equation}\label{in:incrrot}
  \inc{\lambda O}{\R^n_+}{x}\ge\sqrt{n},\quad\forall x\in\R^n.
\end{equation}
To see this, recall that, according to \cite[Example 3.2]{Uder19},
whenever a linear mapping $\Lambda:\R^n\longrightarrow\R^n$ satisfies the condition
\begin{equation}     \label{in:surincrlinear}
   \sur\Lambda=\inf_{\|u\|=1}\|\Lambda^\top u\|=\dist{\nullv}{\Lambda^\top\Usfer}>n,
\end{equation}
then $\Lambda$ turns out to be metrically $\R^n_+$-increasing at each point $x$ of
$\R^n$, with $\inc{\Lambda}{\R^n_+}{x}\ge\sqrt{n}$.
Now, since it is $O^\top=O^{-1}\in\Rot(n)$ ($O^\top$ standing for the transposed of $O$)
and $O\Usfer=\Usfer$ for every $O\in\Rot(n)$,
one obtains
$$
  \sur(\lambda O)=\dist{\nullv}{(\lambda O)^\top\Usfer}=\dist{\nullv}{\lambda O^{-1}\Usfer}=
  \dist{\nullv}{\lambda\Usfer}=\lambda,
$$
which says that condition (\ref{in:surincrlinear}) is fulfilled inasmuch as $\lambda>n$.
The reader should notice that, more generally, for a linear mapping $\Lambda:\R^n\longrightarrow\R^m$
the value $\sur\Lambda$ provides a representation of the exact bound of
open covering of $\Lambda$ (see \cite[Corollary 1.58]{Mord06}).
For a comprehensive discussion of this
property (a.k.a. openness at a linear rate) and its crucial role in
variational analysis, the reader is referred to \cite{Mord06,Peno13,Thib23}).
Consistently, by dealing directly with Definition \ref{def:Cincrpropdef}  it is
possible to obtain for this class of mappings a sharper estimate of
the exact bound of $\R^n_+$-increase. Indeed, in the case $n=2$, given $\theta\in\R$,
consider
$$
 O_\theta=\left(\begin{array}{cc}
            \cos\theta & -\sin\theta \\
            \sin\theta & \cos\theta
          \end{array}\right)\in\Rot(2),\quad\forall \theta\in\R.
$$
Fixed an arbitrary $r>0$, since it holds
$$
   O_\theta\circ O_\eta=O_{\theta+\eta},\quad\forall \theta,\, \eta\in\R,
$$
then by setting
\begin{equation}     \label{eqin:defumetincpro}
  u=O_{{\pi\over 4}-\theta}\binom{r}{0}\in r\Usfer\subseteq r\Uball
\end{equation}
one finds
$$
  (\lambda O_\theta) u=\lambda O_\theta\circ O_{{\pi\over 4}-\theta}\binom{r}{0}=
  {\lambda r\over\sqrt{2}}\binom{1}{1}.
$$
Therefore, one obtains
\begin{eqnarray*}
  \ball{(\lambda O_\theta) u}{\left({\lambda\over\sqrt{2}}+1\right)r} &=&
  \ball{{\lambda r\over\sqrt{2}}\binom{1}{1}}{\left({\lambda\over\sqrt{2}}+1\right)r}
   \subseteq  \ball{\R^2_+}{r}= \\
   &=& \ball{(\lambda O_\theta)\nullv+\R^2_+}{r},\quad\forall r>0.
\end{eqnarray*}
The above inclusion shows that $\lambda O_\theta$ is $\R^2_+$-increasing
at $\nullv$ with $\inc{\lambda O_\theta}{\R^2_+}{\nullv}\ge {\lambda\over\sqrt{2}}+1$.
If arguing by means of level sets and symmetry, it is possible to deduce
that the point $u$, defined as in (\ref{eqin:defumetincpro}), solves
the problem
$$
   \max_{x\in r\Uball}\dist{\lambda O_\theta x}{\bd\R^2_+}.
$$
Equivalently, its image $\lambda O_\theta u$ is the farthest point
of $\lambda r\Uball$ from $\bd\R^2_+$. This fact leads to establish
the equality
$$
  \inc{\lambda O_\theta}{\R^2_+}{\nullv}={\lambda\over\sqrt{2}}+1.
$$
By virtue of the linearity of the mapping represented by $\lambda O_\theta$,
it is possible to conclude that for any $\theta\in\R$ it holds
\begin{equation}     \label{in:estincrlOtheta}
   \inc{\lambda O_\theta}{\R^2_+}{x}={\lambda\over\sqrt{2}}+1,
   \quad\forall x\in\R^2.
\end{equation}
\end{example}

\begin{example}
Let $X=\R$ and $\Y=\R^2$ be equipped with their usual Euclidean space structure,
and let $C=\R^2_+$. Define $\Gmap:\R\rightrightarrows\R^2$ by setting
$$
  \Gmap(x)=\{y=(y_1,y_2)\in\R^2:\ \min\{y_1,\, y_2\}=x\}.
$$
It is readily seen that the values taken by $\Gmap$ are closed but unbounded
subsets of $\R^2$. Fix arbitrary $x\in\R$ and $r>0$. Then, by choosing
$u=x+r\in\ball{x}{r}$, one finds
\begin{eqnarray*}
  \ball{\Gmap(u)}{2r} &=& \ball{(x+r)\binom{1}{1}+\bd\R^2_+}{2r} \\
   &\subseteq & \ball{x\binom{1}{1}+\R^2_+}{r}=\ball{\Gmap(x)+\R^2_+}{r}.
\end{eqnarray*}
The above inclusion enables one to assert that $\Gmap$ is $\R^2_+$-increasing
at every $x\in\dom\Gmap=\R$, with $\inc{\Gmap}{\R^2_+}{x}\ge 2$.
\end{example}

As remarked in \cite{Uder19}, many other examples of $C$-increasing mappings
can be built by exploiting the persistence of the $C$-increase property under small
additive perturbations, which is stated in the proposition below.

\begin{proposition}    \label{pro:Cincrprosta}
Let $\Gmap:X\rightrightarrows\R^m$ and $\Hmap:X\rightrightarrows\R^m$ be
set-valued mappings and let $C\subseteq\R^m$ be a closed, convex cone.
Suppose that:
\begin{itemize}

\item[(i)] $\Gmap$ is metrically $C$-increasing at $x_0\in X$;

\item[(ii)] $\Gmap+\Hmap$ is closed valued;

\item[(iii)] $\Hmap$ is Lipschitz continuous on $X$ with a
constant $\ell$ satisfying the condition
$$
  \ell<1-{1\over\inc{\Gmap}{C}{x_0}}.
$$
\end{itemize}
Then, the set-valued mapping $\Gmap+\Hmap:X\rightrightarrows\R^m$
is metrically $C$-increasing at $x_0$, with
$$
   \inc{\Gmap+\Hmap}{C}{x_0}\ge (1-\ell)\inc{\Gmap}{C}{x_0}.
$$
\end{proposition}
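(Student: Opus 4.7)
The plan is to transfer the $C$-increase inclusion from $\Gmap$ to $\Gmap+\Hmap$ by using the \emph{same} witness point $u\in\ball{x_0}{r}$ and absorbing the Lipschitz perturbation of $\Hmap$ into the shrunk enlargement radius. Set $\alpha_G:=\inc{\Gmap}{C}{x_0}$, so that hypothesis (iii) is equivalent to $(1-\ell)\alpha_G>1$. By the definition (\ref{def:Cincrbxbo}) it suffices to show that every $\beta\in\bigl(1,(1-\ell)\alpha_G\bigr)$ lies in the set whose supremum is $\inc{\Gmap+\Hmap}{C}{x_0}$. Given such a $\beta$, I would pick $\alpha\in\bigl(\beta/(1-\ell),\,\alpha_G\bigr)$ (this interval is nonempty by the choice of $\beta$) and note the crucial margin
\[
  \alpha-\beta-\ell\;\ge\;\alpha-\beta-\alpha\ell\;=\;\alpha(1-\ell)-\beta\;>\;0,
\]
where the first inequality uses $\alpha\ge 1$. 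Then, by the $C$-increase of $\Gmap$ at $x_0$, there is $\delta>0$ such that for every $r\in(0,\delta]$ some $u=u(r)\in\ball{x_0}{r}$ satisfies $\ball{\Gmap(u)}{\alpha r}\subseteq\ball{\Gmap(x_0)+C}{r}$; this same $u$ will be the candidate witness for $\Gmap+\Hmap$.

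Fix now $r\in(0,\delta]$ and any $y\in\ball{(\Gmap+\Hmap)(u)}{\beta r}$, and decompose $y=g+h+v$ with $g\in\Gmap(u)$, $h\in\Hmap(u)$, $\|v\|\le\beta r$. Since $\haus{\Hmap(u)}{\Hmap(x_0)}\le\ell d(u,x_0)\le\ell r$, for any $\epsilon\in(0,\alpha r-\beta r-\ell r]$ one can pick $h'\in\Hmap(x_0)$ with $\|h-h'\|\le\ell r+\epsilon$. Setting $g':=y-h'=g+v+(h-h')$ yields
\[
   \|g'-g\|\le\beta r+\ell r+\epsilon\le\alpha r,
\]
hence $g'\in\ball{\Gmap(u)}{\alpha r}\subseteq\ball{\Gmap(x_0)+C}{r}$, i.e.\ $\dist{g'}{\Gmap(x_0)+C}\le r$. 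Since $h'\in\Hmap(x_0)$, adding $h'$ to the target set preserves this bound, so $\dist{y}{\Gmap(x_0)+\Hmap(x_0)+C}\le r$, that is $y\in\ball{(\Gmap+\Hmap)(x_0)+C}{r}$. This proves the $\beta$-increase inclusion for every $r\in(0,\delta]$; letting $\beta\uparrow(1-\ell)\alpha_G$ gives the stated lower bound on $\inc{\Gmap+\Hmap}{C}{x_0}$.

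The main delicate step is the constant-balancing: on the left-hand ball one has the shrunken radius $\beta r$, while on the right a Lipschitz discrepancy of $\ell r$ separates $\Hmap(u)$ from $\Hmap(x_0)$, and yet the auxiliary point $g'=y-h'$ must still lie inside the original $\alpha r$-enlargement of $\Gmap(u)$ coming from the $\Gmap$-inclusion. Hypothesis (iii) is precisely what makes $\alpha(1-\ell)-\beta>0$ realisable for admissible $\alpha,\beta$, leaving strictly positive room for the $\epsilon$-slack that arises from the possible non-attainment of the nearest point to $h$ in $\Hmap(x_0)$. Without the strict inequality in (iii), this margin would disappear and the scheme would collapse; the closed-valuedness of $\Gmap+\Hmap$ assumed in (ii) plays no role in the above derivation but ensures that the resulting mapping fits the framework of Definition \ref{def:Cincrpropdef}.
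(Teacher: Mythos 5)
Your argument is correct and complete, and it is of the same perturbation type as the proof the paper defers to (the paper itself gives no proof of Proposition \ref{pro:Cincrprosta}, citing instead \cite[Proposition 3.6]{Uder19}): use the witness $u$ furnished by the $C$-increase of $\Gmap$, shrink the radius from $\alpha r$ to $\beta r$ to absorb the Lipschitz discrepancy $\ell r$ of $\Hmap$, and translate by an almost-nearest point $h'\in\Hmap(x_0)$. The bookkeeping checks out: the interval $\bigl(\beta/(1-\ell),\alpha_G\bigr)$ is nonempty, the margin $\alpha-\beta-\ell>0$ is established correctly (using $\alpha>1$), and the final translation step $\dist{y}{\Gmap(x_0)+\Hmap(x_0)+C}\le\dist{g'}{\Gmap(x_0)+C}$ is legitimate. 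One small correction: your closing claim that hypothesis (ii) ``plays no role'' is not quite right as the proof is written, because the exact decomposition $y=g+h+v$ with $\|v\|\le\beta r$ tacitly uses that the distance from $y$ to $(\Gmap+\Hmap)(u)$ is attained, which is what closed-valuedness (plus finite dimensionality of $\R^m$) guarantees; without (ii) you would have to allow $\|v\|\le\beta r+\epsilon''$ and absorb $\epsilon''$ into the same positive margin $(\alpha-\beta-\ell)r$ that already accommodates the slack $\epsilon$ in the choice of $h'$. Either fix is one line, so this is a presentational blemish rather than a gap.
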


For details on its proof the reader is referred to \cite[Proposition 3.6]{Uder19}.

\vskip1cm


\section{Nonlocal implicit function theorems} \label{Sect:3}

The main findings of the paper are exposed in the two subsequent
sections, considering problems $(\SVI_p)$ and $(\CSVI_p)$ separately.
In both these subsections, the key property to achieve these results
is the increasing property of $\Fmap$ with respect to $\Cmap$.

\subsection{The unconstrained case}

The first step towards a global implicit function theorem for
problem $(\SVI_p)$ consists in ensuring global solution existence
in the presence of parameter perturbation, along with a proper
error bound, what is done in the next proposition by means of
the following constant, which can be built by the problem data:

\begin{equation}    \label{in:incFmapCX}
 \alpha_{\Fmap,\Cmap}=\inf\bigg\{\inc{\Fmap(p,\cdot)}{\Cmap(p)}{x}:\
 (p,x)\in P\times X,\ \Fmap(p,x)\not\subseteq\Cmap(p)\bigg\}.
\end{equation}

\begin{proposition}[{\bf Global solvability}]   \label{pro:glosolv}
With reference to a parameterized family of set-valued inclusion
problems $(\SVI_p)$, suppose that:

\begin{itemize}

\item[(i)] for every $p\in P$ there exists $\hat x_p\in X$ such that
$\exc{\Fmap(p,\hat x_p)}{\Cmap(p)}<+\infty$;

\item[(ii)] $\Fmap(p,\cdot):X\rightrightarrows\Y$ is l.s.c. on $X$,
for every $p\in P$;

\item[(iii)] it holds $\alpha_{\Fmap,\Cmap}>1$.

\end{itemize}
Then, for any $\alpha\in (1,\alpha_{\Fmap,\Cmap})$ and any $x_0\in X$ it holds
\begin{equation}   \label{in:glosolvthesis}
  \Smap(p)\ne\varnothing \quad\hbox{ and }\quad\dist{x_0}{\Smap(p)}\le
  {\exc{\Fmap(p,x_0)}{\Cmap(p)}\over \alpha-1},\quad\forall p\in P.
\end{equation}
\end{proposition}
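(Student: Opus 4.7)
The proof will run through the variational-principle machinery prepared in Section \ref{Sect:2}. Fix $p\in P$ and $\alpha\in(1,\alpha_{\Fmap,\Cmap})$, and view $\mef(p,\cdot):X\longrightarrow[0,+\infty]$, as defined in (\ref{eq:defmef}), as the merit function to be minimized. The goal is to check that $\mef(p,\cdot)$ meets the hypotheses of Remark \ref{rem:Carcond}, with the Caristi-type constant $\kappa=\alpha-1$; then the conclusions of (\ref{in:glosolvthesis}) follow immediately upon reading off $[\mef(p,\cdot)\le 0]=\Smap(p)$ and the distance estimate in (\ref{in:BPVP}).

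First I would dispatch the standing properties of $\mef(p,\cdot)$: it is lower semicontinuous on $X$ by assumption (ii) combined with Remark \ref{rem:scontmef}(i), and its effective domain is nonempty by assumption (i), since $\hat x_p$ provides a point where the excess is finite. The completeness of $(X,d)$ is already ensured by $(\mathfrak{A}_0)$.

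The core of the argument is the verification of $(\Carcond)$ with $\kappa=\alpha-1$. Pick any $x\in[\mef(p,\cdot)>0]$, so $\Fmap(p,x)\not\subseteq\Cmap(p)$. By hypothesis (iii) and the very definition (\ref{in:incFmapCX}), $\inc{\Fmap(p,\cdot)}{\Cmap(p)}{x}\ge\alpha_{\Fmap,\Cmap}>\alpha$, hence there exists $\delta>0$ such that, for every $r\in(0,\delta]$, some $u\in\ball{x}{r}$ satisfies
\[
  \ball{\Fmap(p,u)}{\alpha r}\subseteq\ball{\Fmap(p,x)+\Cmap(p)}{r}.
\]
Crucially, Remark \ref{rem:unexCincrprop} forces $u\ne x$, because $\Fmap(p,x)\not\subseteq\Cmap(p)$. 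I then shrink $r$ if necessary so that additionally $r\le \mef(p,x)/(\alpha-1)$ (no shrinking is needed in the degenerate case $\mef(p,x)=+\infty$). Taking the excess from $\Cmap(p)$ on both sides of the above inclusion, monotonicity of the excess together with property $(\mathfrak{P}_1)$ (which gives $\exc{\Fmap(p,x)+\Cmap(p)}{\Cmap(p)}=\mef(p,x)>0$) and property $(\mathfrak{P}_2)$ yield
\[
  \mef(p,u)+\alpha r\le\mef(p,x)+r
\]
whenever $\mef(p,u)>0$; if $\mef(p,u)=0$ the same inequality $(\alpha-1)r\le\mef(p,x)$ is secured directly by our choice of $r$. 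In either case, using $d(x,u)\le r$,
\[
  \mef(p,u)+(\alpha-1)d(x,u)\le\mef(p,u)+(\alpha-1)r\le\mef(p,x),
\]
which is exactly $(\Carcond)$ with $\hat x=u$ and $\kappa=\alpha-1$.

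To close, I would invoke Remark \ref{rem:Carcond} on $\mef(p,\cdot)$: for any $x_0\in X$ there is $x_\kappa\in X$ with $\mef(p,x_\kappa)=0$, i.e.\ $x_\kappa\in\Smap(p)$, and $d(x_\kappa,x_0)\le\mef(p,x_0)/(\alpha-1)$, delivering both the nonemptiness of $\Smap(p)$ and the error bound in (\ref{in:glosolvthesis}). The main obstacle I anticipate is the bookkeeping in the excess computation, specifically ensuring that property $(\mathfrak{P}_2)$ may be legitimately applied on the right-hand side (which requires the strict positivity of the excess, furnished automatically by $x\in[\mef(p,\cdot)>0]$) and that the left-hand side is handled correctly when $\Fmap(p,u)$ may or may not already sit inside $\Cmap(p)$; the auxiliary constraint $r\le\mef(p,x)/(\alpha-1)$ is what unifies these two sub-cases.
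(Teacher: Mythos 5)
Your proposal is correct and follows essentially the same route as the paper: the merit function $\mef_p$, verification of the Caristi-type condition with $\kappa=\alpha-1$ via the $\Cmap(p)$-increase property combined with $(\mathfrak{P}_1)$--$(\mathfrak{P}_2)$ and Remark \ref{rem:unexCincrprop}, and then the Bishop--Phelps/Ekeland principle of Remark \ref{rem:Carcond}. The only difference is local and harmless: where the paper uses the lower semicontinuity of $\mef_p$ at $x$ to shrink $r$ so that $\mef_p(u)>0$ (which legitimizes applying $(\mathfrak{P}_2)$ on the left-hand side), you instead impose the auxiliary constraint $r\le\mef_p(x)/(\alpha-1)$ and dispose of the case $\mef_p(u)=0$ directly, which works equally well.
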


\begin{proof}
Fix $x_0\in X$ and $\alpha\in (1,\alpha_{\Fmap,\Cmap})$.
Then fix an arbitrary $p\in P$. If $x_0\in\Smap(p)$, all the assertions in
(\ref{in:glosolvthesis}) are evidently true. So, assume that $x_0
\not\in\Smap(p)$ and consider the merit function associated with $p$, namely
$\mef_p:X\longrightarrow [0,+\infty]$, defined by
\begin{equation}    \label{eq:defmefp}
  \mef_p(x)=\exc{\Fmap(p,x)}{\Cmap(p)}.
\end{equation}
Notice that by virtue of hypothesis (i) it is $\dom\mef_p\ne\varnothing$,
inasmuch as $\mef_p(\hat x_p)<+\infty$. Moreover, according with what was recalled
in Remark \ref{rem:scontmef}(i), $\mef_p$ is l.s.c. on $X$ owing to hypothesis (ii).
By using hypothesis (iii) it is possible to show that $\mef_p$ satisfies
the Caristi-type condition ($\Carcond$), with $\kappa=\alpha-1$. Indeed,
take an arbitrary $x\in X$ such that $\mef_p(x)>0$ (including
the case $\mef_p(x)=+\infty$), so $\Fmap(p,x)\not\subseteq\Cmap(p)$.
As $\mef_p$ is, in particular,
l.s.c. at $x$, there exists $\delta_*>0$ such that
\begin{equation}   \label{in:mefpposnearx}
  \mef_p(z)>0,\quad\forall z\in \ball{x}{\delta_*}.
\end{equation}
Since $\Fmap(p,\cdot)$ is $\Cmap(p)$-increasing at $x$ with
$\inc{\Fmap(p,\cdot)}{\Cmap(p)}{x}>\alpha$, according to
(\ref{def:Cincrbxbo}) there must exist $\delta_\alpha>0$ such that
for every $r\in (0,\delta_\alpha]$ there is $u\in\ball{x}{r}$ such that
\begin{equation}   \label{in:CpincrproFp}
   \ball{\Fmap(p,u)}{\alpha r}\subseteq \ball{\Fmap(p,x)+\Cmap(p)}{r}.
\end{equation}
Thus, by choosing $0<r<\min\left\{\delta_*,\delta_\alpha\right\}$,
one can assert that, in the light of the fact that $\Fmap(p,x)\not\subseteq\Cmap(p)$
and Remark \ref{rem:unexCincrprop},
it must be $u\in X\backslash\{x\}$. Furthermore,
because of inequality (\ref{in:mefpposnearx}) it is $\mef_p(u)>0$.
Therefore, by exploiting property $(\mathfrak{P}_1)$, inclusion (\ref{in:CpincrproFp}),
and property $(\mathfrak{P}_2)$, one obtains
\begin{eqnarray*}
  \mef_p(u) &= & \exc{\Fmap(p,u)}{\Cmap(p)} =
  \exc{\ball{\Fmap(p,u)}{\alpha r}}{\Cmap(p)}-\alpha r  \\
  &\le & \exc{\ball{\Fmap(p,x)+\Cmap(p)}{r}}{\Cmap(p)}-\alpha r \\
  &= & \exc{\Fmap(p,x)+\Cmap(p)}{\Cmap(p)}+r-\alpha r
  = \exc{\Fmap(p,x)}{\Cmap(p)}-(\alpha-1)r,
\end{eqnarray*}
whence it follows
$$
  \mef_p(u)+(\alpha-1)r\le\mef_p(x).
$$
Since it is $d(u,x)\le r$, the last inequality entails
$$
  \mef_p(u)+(\alpha-1)d(u,x)\le\mef_p(x),
$$
meaning that condition ($\Carcond$) is actually satisfied with
$\kappa=\alpha-1$ and $\hat x=u$.
According to Remark \ref{rem:Carcond} and assumption $(\mathfrak{A}_0)$,
there must exist $x_p\in X$ such that
$$
  \mef_p(x_p)=0 \qquad\hbox { and }\qquad d(x_p,x_0)\le
  {\mef_p(x_0)\over\alpha-1}.
$$
This implies that $x_p\in\Smap(p)\ne\varnothing$ and consequently
the validity of the inequality in (\ref{in:glosolvthesis}),
thereby completing the proof by arbitrariness of $p\in P$.
\end{proof}

\begin{remark}
It should be noticed that hypothesis (i) of Proposition \ref{pro:glosolv}
is valid whenever $\Fmap(p,\cdot)$ takes at least one $\Cmap(p)$-bounded
value, for every $p\in P$, i.e.
$$
  \forall p\in P:\ \exists\hat x_p\in X:\ \Fmap(p,\hat x_p)\backslash
  \Cmap(p)\ \hbox{ is a bounded set in } \Y.
$$
\end{remark}

Proposition \ref{pro:glosolv} provides sufficient conditions for
the existence of solutions to $(\SVI_p)$ under changes of the parameter $p$ all over $P$,
stating equivalently that $\dom\Smap=P$. Besides, the solution existence is complemented with
an error bound, which, in synergy with some further assumptions, enables one
to restore already established results about the quantitative solution stability
of $(\SVI_p)$, in the particular case when $\Cmap$ is constant
(see \cite[Theorem 3.1, Theorem 3.3, Theorem 3.5]{Uder21}).
For instance, if one supposes, in addition to the hypotheses of Proposition
\ref{pro:glosolv}, that:
\begin{itemize}

\item[(iv)] $(\bar p,\bar x)\in\graph\Smap$;

\item[(v)] $\Cmap$ is constant, i.e. $\Cmap(p)=C$, for every $p\in  P$;

\item[(vi)] $(P,\tau)$ is metrizable by a metric $d$ and $\Fmap(\cdot,\bar x):
P\rightrightarrows\Y$ is Lipschitz u.s.c. at $\bar p$, i.e. there exist positive
real $\bar\ell$ and $\bar\delta$ such that
$$
  \Fmap(p,\bar x)\subseteq\ball{\Fmap(\bar p,\bar x)}{\bar\ell d(p,\bar p)},
  \quad\forall p\in\ball{\bar p}{\bar\delta},
$$
\end{itemize}
then $\Smap$ turns out to be Lipschitz l.s.c. at $\bar p$ (as established in
\cite[Theorem 3.1]{Uder21}), i.e. there exist positive
real $\underline{\ell}$ and $\underline{\delta}$ such that
\begin{equation}     \label{ne:SmapLiplsc}
   \Smap(p)\cap\ball{\bar x}{\underline{\ell}d(p,\bar p)}\ne\varnothing,
   \quad\forall  p\in\ball{\bar p}{\underline{\delta}}.
\end{equation}
To see this in detail, observe that according to \cite[Lemma 2.4(ii)]{Uder21},
due to hypothesis (vi) the function
$\mef_{\bar x}:P\longrightarrow[0,+\infty]$, defined by
$$
   \mef_{\bar x}(p)=\exc{\Fmap(p,\bar x)}{C},
$$
is calm from above at $\bar p$, i.e. there exist positive $\gamma$ and $\delta_\gamma$
such that
\begin{equation}    \label{in:mefcalmab}
  \mef_{\bar x}(p)=\mef_{\bar x}(p)-\mef_{\bar x}(\bar p)\le\gamma d(p,\bar p),
  \quad\forall p\in\ball{\bar p}{\delta_\gamma}.
\end{equation}
Then, by applying Proposition \ref{pro:glosolv} with $x_0=\bar x$,
from the inequalities in (\ref{in:glosolvthesis}) and (\ref{in:mefcalmab}),
one obtains
$$
  \dist{\bar x}{\Smap(p)}\le{\exc{\Fmap(p,\bar x)}{C}\over \alpha-1}\le
  {\gamma\over \alpha-1}d(p,\bar p),
  \quad\forall p\in\ball{\bar p}{\delta_\gamma},
$$
which gives (\ref{ne:SmapLiplsc}) with $\underline{\delta}=\delta_\gamma$
and $\underline{\ell}>\gamma/(\alpha-1)$.
Of course, the condition required by hypothesis (iii), involving the
constant $\alpha_{\Fmap,\Cmap}$, is stronger than the corresponding
assumption (hypothesis (iv) in \cite[Theorem 3.1]{Uder21}), which is of local
nature. On the other hand, it is by virtue of such a hypothesis that
Proposition \ref{pro:glosolv} ensures global solvability, in contrast to
\cite[Theorem 3.1]{Uder21}, which can only ensure local solvability.
Similar comments can be repeated for \cite[Theorem 3.3, Theorem 3.5]{Uder21},
by proper replacements of the quantitative Lipschitz semicontinuity assumption on
the term $\Fmap$.

It is important to remark also that the increase property assumption
refers to $\Fmap$ as a multifunction of the variable $x$ only.

The next step consists in obtaining a continuous implicit function
from the mere solvability of $(\SVI_p)$. Such a step will be carried out
in the particular case when $\Cmap$ takes a constant value $C\subset\Y$.
A property useful to pursue this goal has to do with concavity for
set-valued mappings. Consequently, throughout the rest of this subsection
the metric space $(X,d)$ is specialized to be a normed vector space,
denoted by $(\X,\|\cdot\|)$.

Recall that, given a cone $C\subseteq\Y$, a set-valued mapping
$\Gmap:\X\rightrightarrows\Y$ is said to be
$C$-concave on a convex subset $S\subseteq\X$ if
$$
  \Gmap(tx_1+(1-t)x_2)\subseteq t\Gmap(x_1)+(1-t)\Gmap(x_2)+C,
  \quad\forall x_1,\, x_2\in S,\ \forall t\in [0,1].
$$

\begin{example}[Compactly generated fans]     \label{ex:comgenfan}
Let $(\Mat_{m\times n}(\R),\|\cdot\|)$ denote the vector space
of all the $m\times n$ matrices with real entries, equipped
with the operator norm, and let $L$ be a nonempty, convex and
compact subset of $\Mat_{m\times n}(\R)$. Consider the set-valued
mapping $\Hmap_L:\R^n\rightrightarrows\R^m$ defined by
$$
  \Hmap_L(x)=\{\Lambda x:\ \Lambda\in L\}.
$$
Such kind of set-valued mapping, often referred to as a fan, takes nonempty, compact
and convex values. As readily seen, $\Hmap_L$ is positively homogeneous and satisfies
the inclusion
$$
   \Hmap_L(tx_1+(1-t)x_2)\subseteq t\Hmap_L(x_1)+(1-t)\Hmap_L(x_2),
  \quad\forall x_1,\, x_2\in\R^n,\ \forall t\in [0,1].
$$
Thus it turns out to be $C$-concave on $\R^n$, for every cone $C\subseteq\R^m$.
As $L$ is, in particular, a bounded subset of $\Mat_{m\times n}(\R)$,
$\Hmap_L$ can be shown to be Lipschitz continuous on $\R^n$, with the
following Lipschitz constant
\begin{equation}    \label{eq:fannorm}
   \ell_{\Hmap_L}=\max\{\|\Lambda\|: \Lambda\in L\}
\end{equation}
(see, for instance, \cite[Remark 2.14(iii)]{Uder22}).
\end{example}

Given any $p_0\in P$, the next lemma collects some properties
of the solution set $\Smap(p_0)$ to $(\SVI_{p_0})$, stemming
from properties of $\Fmap(p_0,\cdot):\X\rightrightarrows\Y$,
which will be employed in the sequel.

\begin{lemma}    \label{lem:Smapcloseconvex}
Let $(\SVI_p)$ be a parameterized family of set-valued inclusion
problems and let $p_0\in P$.
\begin{itemize}

\item[(i)] If $\Fmap(p_0,\cdot):\X\rightrightarrows\Y$ is l.s.c.
on $\X$, then $\Smap(p_0)$ is a closed set.

\item[(ii)] If $\Fmap(p_0,\cdot):\X\rightrightarrows\Y$ is
$C$-concave on $\X$, then $\Smap(p_0)$ is a convex set.

\end{itemize}
\end{lemma}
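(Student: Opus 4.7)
The overall strategy is to exploit the representation $\Smap(p_0)=[\mef(p_0,\cdot)\le 0]$ for part (i), and to argue directly from the definition of $\Smap(p_0)$ using the cone structure of $C$ for part (ii). Since the subsection has specialized to the case $\Cmap(p)\equiv C$, I will take this specialization as active throughout.

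For part (i), my plan is to invoke Remark \ref{rem:scontmef}(i) applied pointwise: since $\Fmap(p_0,\cdot)$ is l.s.c.\ at every point of $\X$, the merit function $\mef_{p_0}:\X\longrightarrow[0,+\infty]$ defined by $\mef_{p_0}(x)=\exc{\Fmap(p_0,x)}{C}$ is l.s.c.\ on the whole of $\X$. Then $\Smap(p_0)=[\mef_{p_0}\le 0]=\mef_{p_0}^{-1}([-\infty,0])$ is the $0$-sublevel set of a l.s.c.\ function on a normed space, and hence is closed. This is essentially immediate once the semicontinuity transfer from $\Fmap$ to $\mef$ is cited.

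For part (ii), I would take arbitrary $x_1,x_2\in\Smap(p_0)$ and $t\in[0,1]$, so that $\Fmap(p_0,x_1)\subseteq C$ and $\Fmap(p_0,x_2)\subseteq C$. Setting $x_t=tx_1+(1-t)x_2$, the $C$-concavity hypothesis yields
\[
  \Fmap(p_0,x_t)\subseteq t\Fmap(p_0,x_1)+(1-t)\Fmap(p_0,x_2)+C.
\]
Since $C$ is a convex cone, it is closed under multiplication by nonnegative scalars and under finite Minkowski sums with itself, so $t\Fmap(p_0,x_1)\subseteq tC\subseteq C$, $(1-t)\Fmap(p_0,x_2)\subseteq(1-t)C\subseteq C$, and therefore the right-hand side is contained in $C+C+C\subseteq C$. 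Consequently $\Fmap(p_0,x_t)\subseteq C$, i.e.\ $x_t\in\Smap(p_0)$, proving convexity.

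Neither part presents a real obstacle: (i) is a one-line consequence of the already catalogued semicontinuity property of the excess function, and (ii) reduces to the three elementary stability properties of a convex cone (homogeneity under nonnegative scalars, stability under Minkowski sum, and of course the inclusion $C\subseteq C$). The only point deserving explicit mention, to keep the argument self-contained, is the reminder that $\Smap(p_0)$ here is indeed of the form $\{x\in\X:\Fmap(p_0,x)\subseteq C\}$ by virtue of the current specialization $\Cmap(p)\equiv C$.
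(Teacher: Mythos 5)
Your argument is correct and coincides with the paper's own proof: part (i) via the representation $\Smap(p_0)=[\mef(p_0,\cdot)\le 0]$ together with Remark \ref{rem:scontmef}(i), and part (ii) by applying $C$-concavity and absorbing $tC+(1-t)C+C$ into $C$. The only cosmetic difference is that the paper explicitly notes the trivial case $\Smap(p_0)=\varnothing$ in part (ii), which your argument covers vacuously anyway.
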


\begin{proof}
(i) It suffices to remember that $\Smap(p_0)=[\mef(p_0,\cdot)\le 0]$,
where $\mef$ is defined as in (\ref{eq:defmef}),
and to recall that, in the light of Remark \ref{rem:scontmef}(i), function
$\mef(p_0,\cdot)$ is l.s.c. on $\X$ whenever $\Fmap(p_0,\cdot)$ is so.

(ii) If $\Smap(p_0)=\varnothing$ the thesis trivially holds. Otherwise, take
arbitrary $x_1,\, x_2\in\Smap(p_0)$ and $t\in [0,1]$. By virtue of the
$C$-concavity of $\Fmap(p_0,\cdot)$, one finds
\begin{eqnarray*}
  \Fmap(p_0,tx_1+(1-t)x_2) &\subseteq & t\Fmap(p_0,x_1)+(1-t)\Fmap(p_0,x_2)+C \\
   &=& tC+(1-t)C+C=C,
\end{eqnarray*}
which means that $tx_1+(1-t)x_2\in\Smap(p_0)$, thereby proving the convexity
of $\Smap(p_0)$.
\end{proof}

In view of the formulation of the next result, it is convenient to
recall that a topological space $(P,\tau)$ is said to be paracompact
if every open cover of it admits an open refinement, which is locally finite, i.e.
for every point of $P$ there exists a neighbourhood that intersects
only finitely many members of such refinement. It is well known that
every compact topological space is also paracompact as well as every metrizable
topological space. On the other hand, there are paracompact topological
spaces which fail to be both compact and metrizable: consider, for
instance, $(\R,\tau)$, where $\tau$ stands for the lower limit topology
(a.k.a. right half-open interval topology,
see \cite[Part II, Counterexample 51]{SteSee78}).

The following theorem establishes the first of the main results of the paper.

\begin{theorem}[{\bf Global implicit function}]    \label{thm:gloimpfun}
With reference to a parameterized family of set-valued inclusion
problems $(\SVI_p)$, suppose that:

\begin{itemize}

\item[(i)] $(P,\tau)$ is a paracompact topological space and
$(\X,\|\cdot\|)$ is a Banach space;

\item[(ii)] for every $p\in P$ there exists $\hat x_p\in\X$ such that
$\exc{\Fmap(p,\hat x_p)}{C}<+\infty$;

\item[(iii)] $\Fmap(p,\cdot):\X\rightrightarrows\Y$ is l.s.c. on $\X$,
for every $p\in P$;

\item[(iv)] $\Fmap(p,\cdot):\X\rightrightarrows\Y$ is $C$-concave
on $\X$, for every $p\in P$;

\item[(v)] $\Fmap(\cdot,x):P\rightrightarrows\Y$ is Hausdorff $C$-u.s.c. on $P$,
for every $x\in\X$;

\item[(vi)] it holds $\alpha_{\Fmap,\Cmap}>1$.

\end{itemize}
Then, $\dom\Smap=P$ and there exists a function $\impf:P\longrightarrow\X$,
which is continuous on $P$ and such that
$$
  \impf(p)\in\Smap(p),\quad\forall p\in P.
$$
\end{theorem}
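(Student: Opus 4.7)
The plan is to assemble three ingredients already developed in the paper — the global solvability estimate of Proposition \ref{pro:glosolv}, the topological properties of $\Smap(p)$ given by Lemma \ref{lem:Smapcloseconvex}, and the u.s.c.\ of the excess function recorded in Remark \ref{rem:scontmef}(ii) — and then invoke Michael's continuous selection theorem to extract the implicit function $\impf$.

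First I would verify $\dom\Smap=P$ by applying Proposition \ref{pro:glosolv} with $\Cmap(p)\equiv C$: hypotheses (ii), (iii), (vi) of the present theorem supply exactly what is needed, giving, for any fixed $\alpha\in(1,\alpha_{\Fmap,\Cmap})$, the nonemptiness of $\Smap(p)$ together with the quantitative estimate
$$
   \dist{x_0}{\Smap(p)}\le \frac{\exc{\Fmap(p,x_0)}{C}}{\alpha-1},\quad \forall x_0\in\X,\ \forall p\in P.
$$
Next I would appeal to Lemma \ref{lem:Smapcloseconvex}: thanks to hypothesis (iii) each $\Smap(p)$ is closed, and thanks to the $C$-concavity assumption (iv) each $\Smap(p)$ is convex. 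So $\Smap$ takes nonempty, closed, and convex values in the Banach space $\X$.

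The central step — and essentially the only non-bookkeeping one — is showing that $\Smap$ is lower semicontinuous on $P$. Fix $p_0\in P$, take $x_0\in\Smap(p_0)$, and let $\epsilon>0$. Because $x_0\in\Smap(p_0)$ one has $\exc{\Fmap(p_0,x_0)}{C}=0$. Hypothesis (v) says that $\Fmap(\cdot,x_0)$ is Hausdorff $C$-u.s.c.\ at $p_0$, so Remark \ref{rem:scontmef}(ii) (which holds in the general topological setting) yields upper semicontinuity at $p_0$ of the scalar function $p\mapsto\exc{\Fmap(p,x_0)}{C}$. Consequently there is a neighbourhood $U$ of $p_0$ with $\exc{\Fmap(p,x_0)}{C}<(\alpha-1)\epsilon$ on $U$, and the error bound above then gives $\dist{x_0}{\Smap(p)}<\epsilon$ for every $p\in U$. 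This is precisely the standard characterization of lower semicontinuity of $\Smap$ at $p_0$, expressed in terms of approximations of points of $\Smap(p_0)$ by points of nearby values.

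With $\Smap:P\rightrightarrows\X$ l.s.c.\ on the paracompact (Hausdorff) space $P$, with nonempty, closed, convex values in the Banach space $\X$, the classical Michael continuous selection theorem delivers a continuous $\impf:P\longrightarrow\X$ with $\impf(p)\in\Smap(p)$ for all $p\in P$, which is the desired implicit function. I do not expect a genuine obstacle; the subtlety is simply to recognize that the error bound of Proposition \ref{pro:glosolv} converts the merely topological upper semicontinuity of the excess function into lower semicontinuity of $\Smap$, thereby furnishing the one hypothesis of Michael's theorem that is not already built into the standing assumptions of the theorem.
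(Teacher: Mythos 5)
Your proposal is correct and follows essentially the same route as the paper's own proof: Proposition \ref{pro:glosolv} for global solvability and the error bound, Lemma \ref{lem:Smapcloseconvex} for closedness and convexity of the values, the upper semicontinuity of $p\mapsto\exc{\Fmap(p,x_0)}{C}$ from Remark \ref{rem:scontmef}(ii) combined with the error bound to get lower semicontinuity of $\Smap$, and finally the Michael selection theorem. The only cosmetic difference is that you phrase lower semicontinuity via the distance characterization $\dist{x_0}{\Smap(p)}<\epsilon$, whereas the paper works with open sets $A$ meeting $\Smap(p_0)$; the two are equivalent and the underlying estimate is identical.
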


\begin{proof}
Under the above hypotheses it is clearly possible to invoke Proposition
\ref{pro:glosolv}, so that for any fixed $\alpha\in (1,\alpha_{\Fmap,\Cmap})$
and $z\in\X$, it is $\dom\Smap=P$ and the inequality
\begin{equation}    \label{in:erboSmapz}
  \dist{z}{\Smap(p)}\le {\exc{\Fmap(p,z)}{C}\over \alpha-1},
  \quad\forall p\in P.
\end{equation}
holds true.
Observe that, in the light of Lemma \ref{lem:Smapcloseconvex}(i),
$\Smap:P\rightrightarrows\X$ is closed valued as each
set-valued mapping $\Fmap(p,\cdot)$ is l.s.c. on $\X$.
Furthermore, by virtue of hypothesis (iv) and Lemma \ref{lem:Smapcloseconvex}(ii),
$\Smap$ turns out to take convex values.
Let us show that $\Smap$ is l.s.c. on $P$. To this aim, fix arbitrary
$p_0\in P$ and $A\subseteq\X$, with $A$ open, such that $\Smap(p_0)\cap A\ne
\varnothing$. Then take a point $x_0\in\Smap(p_0)\cap A$. As $A$ is open, there
exists $r_0>0$ such that $\ball{x_0}{r_0}\subseteq A$.
Since on account of hypothesis (v) $\Fmap(\cdot,x_0)$ is Hausdorff $C$-u.s.c. on $P$,
by remembering Remark \ref{rem:scontmef}(ii) one deduces that the scalar function
$p\mapsto\exc{\Fmap(\cdot,x_0)}{C}$ is u.s.c. in particular at $p_0$.
As a consequence, there exists a neighbourhood $U$ of $p_0$ such that
$$
  \exc{\Fmap(p,x_0)}{C}\le \exc{\Fmap(p_0,x_0)}{C}+{(\alpha-1)r_0\over 2}
  ={(\alpha-1)r_0\over 2},\quad\forall   p\in U.
$$
By taking into account inequality (\ref{in:erboSmapz}) with $z=x_0$,
from the last inequality one obtains
$$
  \dist{x_0}{\Smap(p)}\le {\exc{\Fmap(p,x_0)}{C}\over \alpha-1}\le
  {r_0\over 2},\quad\forall p\in U.
$$
This clearly implies
$$
  \ball{x_0}{r_0}\cap\Smap(p)\ne\varnothing,
  \quad\forall p\in U
$$
and hence, as it is $\ball{x_0}{r_0}\subseteq A$,
$$
  A\cap\Smap(p)\ne\varnothing,\quad\forall p\in U.
$$
Thus, by arbitrariness of $p_0\in P$, $\Smap$ is shown to be
l.s.c. on $P$. The above properties of $\Smap:P\rightrightarrows\X$,
along with the paracompactness of $P$ and the Banach space structure on $\X$
(both assumed in (i)), enable one to apply the Michael selection theorem (see
\cite{Mich56}). This ensures that existence of a global continuous
selection $\impf:P\longrightarrow\X$ of $\Smap$ and therefore completes
the proof.
\end{proof}

As a comment to Theorem \ref{thm:gloimpfun} it is worth noting
that all of its hypotheses, with the only exception of (vi), are merely
qualitative. In fact, on the parameter space $(P,\tau)$ no metric
structure is imposed. Consistently, no quantitative or enhanced form
of semicontinuity (such as Lipschitz lower/upper semicontinuity, calmness,
Aubin property) is invoked among the assumptions.
The only quantitative hypothesis concerns $\alpha_{\Fmap,\Cmap}$,
which is employed to ensure the lower semicontinuity of $\Smap$.
In contrast with that, all the existing stability results for parameterized
set-valued inclusions refer to quantitative properties of $\Smap$
(see the already mentioned \cite[Theorem 3.1,Theorem 3.3, Theorem 3.5]{Uder21}).

\begin{example}
Let $(P,\tau)$ be a paracompact topological space, let $\X=\R^n$,
$\Y=\R^m$ and $C=\R^m_+$, with $n\ge m\ge 2$. Let us consider a
continuous function $M:P\longrightarrow\Mat_{m\times n}(\R)$
such that
\begin{equation}    \label{eq:incrbosigma}
  \sigma_M=\inf_{p\in P}\inc{M(p)}{\R^m_+}{\nullv}>1.
\end{equation}
Let $h=(h_1,\dots,h_m):\R^n\longrightarrow\R^m$ be any function
Lipschitz continuous on $\R^n$, with constant $\ell_h$, and
having each component $h_i:\R^n\longrightarrow\R$ concave on $\R^n$,
$i=1,\dots,m$.
Let $\Hmap_L:\R^n\rightrightarrows\R^m$ be a fan which is compactly
generated by $L\subset\Mat_{m\times n}(\R)$ (see Example \ref{ex:comgenfan}),
with $\ell_{\Hmap_L}$ given by (\ref{eq:fannorm}). Suppose that
$$
  \ell_h+\ell_\Hmap<1-\frac{1}{\sigma_M}.
$$
Observe that, since $h$ and $\Hmap_L$ are Lipschitz continuous on $\R^n$,
then also their sum $h+\Hmap_L$ is Lipschitz continuous,
with constant $\ell_h+\ell_{\Hmap_L}$.
Given the above data, consider the  family of problems $(\SVI_p)$
\begin{equation}\label{eqin:SVIpproex}
  \Fmap(p,x)=M(p)x+h(x)+\Hmap_L(x)\subseteq\R^m_+.
\end{equation}
Let us check that such a class of problems fulfils all the hypotheses
of Theorem \ref{thm:gloimpfun}.
Fixed any $p\in P$, as $h+\Hmap_L$ is Lipschitz continuous on $\R^n$,
then $\Fmap(p,\cdot)=M(p)+h+\Hmap_L$ is Lipschitz continuous as well.
As a consequence, $\Fmap(p,\cdot)$ is a fortiori l.s.c. on $\R^n$.
Since $h:\R^n\longrightarrow\R^m$ is $\R^m_+$-concave, so is $M(p)+h$.
Therefore, as $\Hmap_L$ is $\R^m_+$-concave, the same holds true for
the sum $\Fmap(p,\cdot)$.
The fact that $\Hmap_L$ takes bounded values, $L$ being bounded in $\Mat_{m\times}
(\R)$, entails that so does also $\Fmap$, what ensures the satisfaction
of  hypothesis (ii) in Theorem \ref{thm:gloimpfun}.
Since $M$ is continuous on $P$, its translation $\Fmap(\cdot,x)$
by the constant (with respect to $p$) set $h(x)+\Hmap_L(x)$
is Hausdorff $\R^m_+$-u.s.c. on $P$, for every $x\in\R^n$.
Finally, by recalling the condition in (\ref{eq:incrbosigma}), one has that
for every $p\in P$ the linear mapping represented by $M(p)$ is
$\R^m_+$-increasing on $\R^n$, with $\inc{M(p)}{R^m_+}{x}
\ge\sigma_M$ for every $x\in\R^n$. As it is $\ell_h+\ell_{\Hmap_L}<1-\frac{1}{\sigma_M}$,
Proposition \ref{pro:Cincrprosta} enables one to assert that the
additive perturbation $\Fmap(p,\cdot)$ is still $\R^m_+$-increasing
on $\R^n$, with
\begin{eqnarray*}
  \inc{\Fmap(p,\cdot)}{\R^m_+}{x} &\ge & (1-\ell_h-\ell_{\Hmap_L})\inc{M(p)}{\R^m_+}{x}  \\
   &\ge& (1-\ell_h-\ell_{\Hmap_L})\sigma_M>\frac{1}{\sigma_M}\sigma_M=1,
   \quad\forall x\in\R^n,
\end{eqnarray*}
and therefore
$$
  \alpha_{\Fmap,\R^m_+}\ge \inf_{(p,x)\in P\times\R^n}\inc{M(p))}{\R^m_+}{x}\ge
  (1-\ell_h-\ell_{\Hmap_L})\sigma_M>1.
$$
This completes the check of the validity of all the hypotheses in
Theorem \ref{thm:gloimpfun}. It is possible to conclude that
(\ref{eqin:SVIpproex}) singles out a class of $(\SVI_p)$ to which
Theorem \ref{thm:gloimpfun} can be applied.

In order to check up the validity of Theorem \ref{thm:gloimpfun}
by direct computation of solutions to $(\SVI_p)$ in some concrete cases,
let us consider a specific instance of (\ref{eqin:SVIpproex}).
Let us take $P=\R$ equipped with its usual structure, and $m=n=2$.
Consider the mapping $M:\R\longrightarrow\Mat_{2\times 2}(\R)$,
defined by
$$
  M(p)=3O_p=3\left(\begin{array}{cc}
                  \cos p & -\sin p \\
                  \sin p & \cos p
               \end{array}\right).
$$
Its continuous dependence on $p$ is a straightforward consequence
of the continuity of both the functions $p\mapsto\sin p$ and
$p\mapsto\cos p$.
Notice that each matrix $3O_p$ represents a rotation of $\R^2$
rescaled by a factor $3$, as discussed in Example \ref{ex:rescarot}.
Therefore, on the base of the estimates obtained in that example, one has
$$
  \inc{M(p)}{\R^2_+}{x}=\frac{3}{\sqrt{2}}+1,\quad\forall
  (p,x)\in\R\times\R^2.
$$
Thus, in the current case, according to (\ref{in:incFmapCX}) it holds
$$
  \alpha_{\Fmap,\R^2_+}=\frac{3}{\sqrt{2}}+1>1.
$$
Choose $h:\R^2\longrightarrow\R^2$, defined by
$$
  h(x)=\left(\begin{array}{c}
               -1-\displaystyle\frac{|x_1|}{4} \\
               -1-\displaystyle\frac{|x_2|}{4}
             \end{array}\right),  \qquad x=(x_1,x_2),
$$
and the fan $\Hmap_L:\R^2\rightrightarrows\R^2$, generated by the following
convex compact subset of $\Mat_{2\times 2}(\R)$
$$
  L=\left\{\left(\begin{array}{cc}
                  \lambda & 0 \\
                  0 & \lambda
               \end{array}\right):\ \lambda\in
               \left[-\frac{1}{4},\frac{1}{4}\right]\right\}.
$$
The above data single out the following instance of the problem
class $(\SVI_p)$: find $x\in\R^2$ such that
\begin{equation}    \label{in:speinstSVIp}
    3\left(\begin{array}{cc}
                  \cos p & -\sin p \\
                  \sin p & \cos p
     \end{array}\right)\binom{x_1}{x_2}+
               \left(\begin{array}{c}
               -1-\displaystyle\frac{|x_1|}{4} \\
               -1-\displaystyle\frac{|x_2|}{4}
     \end{array}\right)+\left[-\frac{1}{4},\frac{1}{4}\right]
     \binom{x_1}{x_2}\subseteq\R^2_+,
\end{equation}
where $[-1/4,1/4]x=\{\lambda x:\ \lambda\in [-1/4,1/4]\}$.
It is plain to see that, fixed $p\in\R$, the set-valued inclusion
problem (\ref{in:speinstSVIp})
is solved by any $x\in\R^2$, which satisfies the following inequality
system:
$$
   \left\{\begin{array}{cc}
       3(\cos p)x_1-3(\sin p)x_2\ge 1+\displaystyle\frac{|x_1|}{4}+\displaystyle\frac{|x_1|}{4}  \\
       \\
       3(\sin p)x_1+3(\cos p)x_2\ge 1+\displaystyle\frac{|x_2|}{4}+\displaystyle\frac{|x_2|}{4}.
   \end{array}\right.
$$
Thus, by defining $\impf:\R\longrightarrow\R^2$ as being
$$
  \impf(p)=O_{\frac{\pi}{4}-p}\binom{1}{0}=
  \binom{\cos\left(\frac{\pi}{4}-p\right)}{\sin\left(\frac{\pi}{4}-p\right)},
$$
one obtains
$$
  3O_p\impf(p)=\left[3O_p\circ O_{\frac{\pi}{4}-p}\right]\binom{1}{0}=
  3O_{\frac{\pi}{4}}\binom{1}{0}=\binom{3/\sqrt{2}}{3/\sqrt{2}}.
$$
On the other hand, as it is $\impf(p)=(\impf_1(p),\impf_2(p))\in\Usfer$
for every $p\in\R$, one finds
$$
   \left\{\begin{array}{cc}
       \displaystyle\frac{3}{\sqrt{2}}>\displaystyle\frac{3}{2}\ge
       1+\displaystyle\frac{|\impf_1(p)|}{4}+\displaystyle\frac{|\impf_1(p)|}{4}  \\
       \\
       \displaystyle\frac{3}{\sqrt{2}}>\displaystyle\frac{3}{2}\ge
       1+\displaystyle\frac{|\impf_2(p)|}{4}+\displaystyle\frac{|\impf_2(p)|}{4},
   \end{array}\right. \qquad\forall p\in\R,
$$
which shows that $x=\impf(p)$ actually solves the above inequality
system for every $p\in\R$.
Consistently with the thesis of Theorem \ref{thm:gloimpfun}, one can
see that an (explicit) solution $x=\impf(p)$ does exist and is continuous on $\R$.
\end{example}

\vskip1cm


\subsection{The constrained case}

In the present subsection, the perturbation analysis so far
conducted for $(\SVI_p)$ problems is extended to the more involved
case of $(\CSVI_p)$. The presence of a constraint mapping $\Rmap$
depending on the perturbation parameter makes such an extension
not trivial, when dealing with continuity of the implicit function.

Throughout the current subsection, along with the standing assumptions
$(\mathfrak{A}_0)$--$(\mathfrak{A}_2)$,
the additional assumption will be maintained:

\begin{itemize}
  \item [$(\mathfrak{A}_3)$] $(X,d)$ is metrically convex.
\end{itemize}

Recall that a metric space $(X,d)$ is said to be metrically convex if for any
pair of distinct points $a,\, b\in X$ there exists $c\in X$ such that
$d(a,b)=d(a,c)+d(c,b)$. Clearly, every convex subset of a normed vector
space is metrically convex, provided that it is equipped with the
norm distance.

A property of the distance function to a
given subset of a metrically convex space, which will be employed
in the sequel, is pointed out in the next lemma.
In view of its statement, let us mention
that a subset $S\subseteq X$ of a metric space is called proximinal
if for every $x\in X$ there exists at least one $s_x\in S$ such that
$d(x,s_x)=\dist{x}{S}$. As a straightforward consequence of such a
definition, one has that every proximinal set is nonempty and closed.
On the other hand, every nonempty, closed and convex subset of a
reflexive Banach space is known to be proximinal.

\begin{lemma}   \label{lem:metsegprox}
Let $(X,d)$ be a metrically convex, complete metric space, let
$S\subseteq X$ a proximinal set, and let $x\in X\backslash S$.
Then for every $r\in (0,\dist{x}{S})$ there exists $u\in\ball{x}{r}\backslash
\{x\}$, such that
$$
  \dist{u}{S}=\dist{x}{S}-d(u,x).
$$
\end{lemma}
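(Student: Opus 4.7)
The plan is to find $u$ as a point on a metric segment joining $x$ to a nearest point of $S$, and then to compute $\dist{u}{S}$ by a two-sided triangle-inequality argument.

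First, I would exploit the proximinality hypothesis to fix $s\in S$ with $d(x,s)=\dist{x}{S}$, and note $d(x,s)>0$ because $x\notin S$. Next I would invoke the classical theorem of Menger, which asserts that every complete metrically convex space is a geodesic space: given any two points $a,b\in X$ and any $\lambda\in[0,d(a,b)]$, there exists $c\in X$ with $d(a,c)=\lambda$ and $d(c,b)=d(a,b)-\lambda$. Applying this with $a=x$, $b=s$, and $\lambda=r\in(0,d(x,s))$, I obtain a point $u\in X$ satisfying $d(u,x)=r$ and $d(u,s)=d(x,s)-r$.

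From this one reads off immediately that $u\in\ball{x}{r}\setminus\{x\}$. To compute $\dist{u}{S}$, I would argue both inequalities. The upper estimate
$$\dist{u}{S}\le d(u,s)=d(x,s)-r=\dist{x}{S}-d(u,x)$$
is clear since $s\in S$. For the reverse, for any $z\in S$ the triangle inequality gives $d(x,z)\le d(x,u)+d(u,z)$, hence
$$d(u,z)\ge d(x,z)-d(u,x)\ge\dist{x}{S}-d(u,x);$$
taking the infimum over $z\in S$ yields $\dist{u}{S}\ge\dist{x}{S}-d(u,x)$. The two inequalities together give the required equality.

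The main obstacle I foresee is the legitimacy of appealing to Menger's theorem to produce a point at a prescribed intermediate distance, rather than merely one intermediate point as in the definition of metric convexity. If a direct citation is not desired, an alternative route is to build $u$ by iterating the one-step metric convexity axiom: starting from the pair $(x,s)$ and repeatedly splitting the segment through intermediate points, one produces a Cauchy sequence (here completeness of $(X,d)$ enters crucially) whose limit realises the prescribed distance $r$ from $x$ and $d(x,s)-r$ from $s$. Either route completes the proof; the rest of the argument is then purely triangle-inequality bookkeeping.
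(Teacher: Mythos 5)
Your proposal is correct and follows essentially the same route as the paper: both fix a nearest point $s_x\in S$ via proximinality, invoke Menger's theorem (cited in the paper as \cite[Theorem 1.97]{Peno13}) to obtain a geodesic joining $x$ to $s_x$, pick an intermediate point $u$ on it, and conclude by the two triangle inequalities $\dist{u}{S}\le d(u,s_x)$ and $\dist{x}{S}\le d(x,u)+\dist{u}{S}$. The only cosmetic difference is that the paper places $u$ at parameter $t\in(0,r)$ rather than exactly at distance $r$; your worry about Menger's theorem is unfounded, since the version the paper uses already yields a full isometric segment, hence points at every prescribed intermediate distance.
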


\begin{proof}
In the light of the Menger theorem (see \cite[Theorem 1.97]{Peno13}),
the metric space $(X,d)$ turns out to be also a metric segment space.
This means that for any pair of distinct points $a,\, b\in X$ there exists a geodesic
joining them, i.e. an isometric mapping $g:[0,d(a,b)]\longrightarrow
g([0,d(a,b)])$, such that $g(0)=a$ and $g(d(a,b))=b$.
Now, since $S$ is proximinal and $x\not\in S$, there exists $s_x\in S$
such that $d_S=\dist{x}{S}=d(x,s_x)>0$. Thus, corresponding to the pair
$x,\, s_x$, there exists a geodesic $g:[0,d_S]\longrightarrow g([0,d_S])$,
such that $g(0)=x$ and $g(d_S)=s_x$.
Take an arbitrary  $r\in (0,d_S)$, choose $t\in (0,r)$ and set $u=g(t)$.
Observe that, as it is $d(u,x)=d(g(t),g(0))=t\in (0,r)$, then one
has $u\in\ball{x}{r}\backslash\{x\}$. Moreover, since it holds
$$
  d(u,s_x)=d(g(t),g(d_S))=d_S-t,
$$
it results in
$$
  \dist{u}{S}\le d(u,s_x)=\dist{x}{S}-d(u,x).
$$
The last inequality, along with the following one
$$
  \dist{x}{S}\le d(x,u)+\dist{u}{S},
$$
gives the equality in the thesis, thereby completing the proof.
\end{proof}

The next proposition provides sufficient conditions for the existence
of solutions to constrained set-valued inclusion problems in the presence
of global parameter perturbations, which are expressed in terms of the
following constant:

\begin{equation}    \label{in:incFmapCXR}
 \widetilde{\alpha}_{\Fmap,\Cmap,\Rmap}=\inf\bigg\{\inc{\Fmap(p,\cdot)}{\Cmap(p)}{x}:\
 (p,x)\in P\times X,\ \Fmap(p,x)\not\subseteq\Cmap(p),\ x\in\Rmap(p)\bigg\}.
\end{equation}

\begin{proposition}[{\bf Constrained global solvability}]   \label{pro:gloconsolv}
With reference to a parameterized family of set-valued inclusion
problems $(\CSVI_p)$, suppose that:

\begin{itemize}

\item[(i)] for every $p\in P$ there exists $\hat x_p\in X$ such that
$\exc{\Fmap(p,\hat x_p)}{\Cmap(p)}<+\infty$;

\item[(ii)] it is $\widetilde{\alpha}_{\Fmap,\Cmap,\Rmap}>1$;

\item[(iii)] $\Fmap(p,\cdot):X\rightrightarrows\Y$ is l.s.c. on $X$
for every $p\in P$ and is Lipschitz continuous
on $X\backslash\Rmap(p)$, with a constant $0<\ell<\widetilde{\alpha}_{\Fmap,\Cmap,\Rmap}-1$,
uniform on $P$;

\item[(iv)] $\Rmap(p)$ is proximinal, for every $p\in P$.

\end{itemize}
Then, $\dom\CSmap=P$ and for any $\alpha$, with
$\displaystyle\frac{\widetilde{\alpha}_{\Fmap,\Cmap,\Rmap}-\ell+1}{2}
<\alpha<\widetilde{\alpha}_{\Fmap,\Cmap,\Rmap}-\ell$,
and any $x_0\in X$ it holds
\begin{equation}   \label{in:gloconsolvthesis}
  \dist{x_0}{\CSmap(p)}\le
  {\exc{\Fmap(p,x_0)}{\Cmap(p)}+(\widetilde{\alpha}_{\Fmap,\Cmap,\Rmap}-\alpha)
  \dist{x_0}{\Rmap(p)}\over
  \widetilde{\alpha}_{\Fmap,\Cmap,\Rmap}-\alpha-\ell},\quad\forall p\in P.
\end{equation}
\end{proposition}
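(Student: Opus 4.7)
The approach I would take mirrors that of Proposition \ref{pro:glosolv}, but applied to an augmented merit function that simultaneously penalizes the inclusion violation and the constraint violation. Fix $\alpha$ in the stated range and, for each $p \in P$, consider
$$
  \cmef_p(x) = \exc{\Fmap(p,x)}{\Cmap(p)} + \bigl(\widetilde{\alpha}_{\Fmap,\Cmap,\Rmap}-\alpha\bigr)\dist{x}{\Rmap(p)}.
$$
Hypothesis (iii) together with Remark \ref{rem:scontmef}(i) makes the first summand l.s.c.\ on $X$; since $\dist{\cdot}{\Rmap(p)}$ is $1$-Lipschitz, $\cmef_p$ is l.s.c.\ on $X$ as well. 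The weight $\widetilde{\alpha}_{\Fmap,\Cmap,\Rmap}-\alpha$ is positive by the upper bound on $\alpha$, so both summands are nonnegative and $[\cmef_p\le 0]=\CSmap(p)$. Assuming $x_0\notin\CSmap(p)$ (otherwise (\ref{in:gloconsolvthesis}) is trivial), the whole argument reduces to checking that $\cmef_p$ satisfies $(\Carcond)$ with constant $\kappa=\widetilde{\alpha}_{\Fmap,\Cmap,\Rmap}-\alpha-\ell$, which is positive by the same upper bound.

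The verification of $(\Carcond)$ splits in two cases according to the position of $x\in[\cmef_p>0]$ with respect to $\Rmap(p)$. In Case A ($x\notin\Rmap(p)$) one has $\dist{x}{\Rmap(p)}>0$, since $\Rmap(p)$ is closed by proximinality. Using the standing assumption $(\mathfrak{A}_3)$, I would invoke Lemma \ref{lem:metsegprox} with $S=\Rmap(p)$ to produce, for any $r\in(0,\dist{x}{\Rmap(p)})$, a point $u\in\ball{x}{r}\setminus\{x\}$ satisfying $\dist{u}{\Rmap(p)}=\dist{x}{\Rmap(p)}-d(u,x)>0$; in particular $u\notin\Rmap(p)$. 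The triangle-type inequality $\exc{A}{C}\le\exc{A}{B}+\exc{B}{C}$, combined with the uniform Lipschitz continuity of $\Fmap(p,\cdot)$ on $X\setminus\Rmap(p)$ given in hypothesis (iii), then yields $\exc{\Fmap(p,u)}{\Cmap(p)} \le \exc{\Fmap(p,x)}{\Cmap(p)} + \ell d(u,x)$. Inserting this into the definition of $\cmef_p$ gives $\cmef_p(u)\le\cmef_p(x)-\kappa d(u,x)$, which is $(\Carcond)$.

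In Case B ($x\in\Rmap(p)$), the positivity $\cmef_p(x)>0$ forces $\exc{\Fmap(p,x)}{\Cmap(p)}>0$, i.e.\ $\Fmap(p,x)\not\subseteq\Cmap(p)$. I would then pick $\beta$ with $1+2(\widetilde{\alpha}_{\Fmap,\Cmap,\Rmap}-\alpha)-\ell<\beta<\widetilde{\alpha}_{\Fmap,\Cmap,\Rmap}$; this interval is nonempty precisely because $\alpha>(\widetilde{\alpha}_{\Fmap,\Cmap,\Rmap}-\ell+1)/2$. Since $\inc{\Fmap(p,\cdot)}{\Cmap(p)}{x}\ge\widetilde{\alpha}_{\Fmap,\Cmap,\Rmap}>\beta$, the single-step Caristi argument from the proof of Proposition \ref{pro:glosolv}---exploiting properties $(\mathfrak{P}_1),(\mathfrak{P}_2)$, the l.s.c.\ of $\exc{\Fmap(p,\cdot)}{\Cmap(p)}$ around $x$, and Remark \ref{rem:unexCincrprop}---transfers verbatim and produces $u\in X\setminus\{x\}$ with $\exc{\Fmap(p,u)}{\Cmap(p)}+(\beta-1)d(u,x)\le\exc{\Fmap(p,x)}{\Cmap(p)}$. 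Since $x\in\Rmap(p)$ implies $\dist{u}{\Rmap(p)}\le d(u,x)$, adding $(\widetilde{\alpha}_{\Fmap,\Cmap,\Rmap}-\alpha)\dist{u}{\Rmap(p)}$ on the left and using the lower bound on $\beta$ delivers $\cmef_p(u)+\kappa d(u,x)\le\cmef_p(x)$.

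With $(\Carcond)$ secured, the Bishop-Phelps/Ekeland reformulation recorded in Remark \ref{rem:Carcond}, combined with the completeness of $(X,d)$ from $(\mathfrak{A}_0)$, provides $x_p\in[\cmef_p\le 0]=\CSmap(p)$ with $d(x_p,x_0)\le\cmef_p(x_0)/\kappa$, which is exactly (\ref{in:gloconsolvthesis}). The main obstacle is the calibration of the penalty weight $\widetilde{\alpha}_{\Fmap,\Cmap,\Rmap}-\alpha$: Case A forces it to be at least $\ell+\kappa$, while Case B---where the point $u$ produced by the $C$-increase may leave $\Rmap(p)$---forces its sum with $\kappa$ not to exceed $\beta-1<\widetilde{\alpha}_{\Fmap,\Cmap,\Rmap}-1$. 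These two constraints are compatible exactly when $\alpha$ lies in the stated range, which explains the peculiar lower bound $(\widetilde{\alpha}_{\Fmap,\Cmap,\Rmap}-\ell+1)/2$.
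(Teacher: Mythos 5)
Your proposal is correct and follows essentially the same route as the paper: the same augmented merit function $\cmef_p$, the same two-case verification of the Caristi-type condition (Lemma \ref{lem:metsegprox} plus the uniform Lipschitz estimate off $\Rmap(p)$ when $x\notin\Rmap(p)$; the $C$-increase property applied at rate $2(\widetilde{\alpha}_{\Fmap,\Cmap,\Rmap}-\alpha)-\ell+1$ when $x\in\Rmap(p)$), and the same conclusion via the Ekeland/Bishop--Phelps reformulation of Remark \ref{rem:Carcond}. The only cosmetic difference is your interposed parameter $\beta$, where the paper works directly with the endpoint value $2(\widetilde{\alpha}_{\Fmap,\Cmap,\Rmap}-\alpha)-\ell+1$.
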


\begin{proof}
First of all observe that the inequality
$$
  \displaystyle\frac{\widetilde{\alpha}_{\Fmap,\Cmap,\Rmap}-\ell+1}{2}
  <\widetilde{\alpha}_{\Fmap,\Cmap,\Rmap}-\ell,
$$
actually holds true,
because it has been supposed $\ell<\widetilde{\alpha}_{\Fmap,\Cmap,\Rmap}-1$.
Besides, as $\alpha<\widetilde{\alpha}_{\Fmap,\Cmap,\Rmap}-\ell$, it is also
$\widetilde{\alpha}_{\Fmap,\Cmap,\Rmap}-\alpha>0$ and $\widetilde{\alpha}_{\Fmap,\Cmap,\Rmap}
-\alpha-\ell>0$, so that the error bound in (\ref{in:gloconsolvthesis})
does make sense.

Fix $x_0\in X$ and the value of $\alpha$ as prescribed in the thesis. Take an
arbitrary $p\in P$. If $x_0\in\CSmap(p)$ all the assertions in the thesis
come true. So, assume that $x_0\not\in\CSmap(p)$ and consider the merit
function $\cmef_p:X\longrightarrow [0,+\infty]$, defined by
$$
  \cmef_p(x)=\mef_p(x)+(\widetilde{\alpha}_{\Fmap,\Cmap,\Rmap}-\alpha)
  \dist{x}{\Rmap(p)},
$$
where $\mef_p$ is as in (\ref{eq:defmefp}). Since $\Rmap(p)$ is nonempty
as a proximinal set, hypothesis (i) guarantees that $\dom\cmef_p\ne\varnothing$.
Since according to Remark \ref{rem:scontmef}(i) $\mef_p$ is l.s.c and function
$x\mapsto(\widetilde{\alpha}_{\Fmap,\Cmap,\Rmap}-\alpha)\dist{x}{\Rmap(p)}$
is Lipschitz continuous on $X$, then their sum $\cmef_p$ is l.s.c. on $X$.
In order to reproduce the argument employed in Proposition \ref{pro:glosolv},
it remains to show that, under the current hypotheses, $\cmef_p$ fulfils
the Caristi-type condition $(\mathfrak{C})$.
To this aim, let us consider an arbitrary $x\in X$, such that
$\cmef_p(x)>0$. If this inequality is true, two cases only are possible:

\vskip.25cm

\noindent{\sc case I:} $\mef_p(x)>0$ and $x\in\Rmap(p)$.

In such an event, it is $\Fmap(p,x)\not\subseteq\Cmap(p)$, so
one can proceed by adapting to the current circumstance
the proof of Proposition \ref{pro:glosolv}. Accordingly, let $\delta_*>0$
be such that $\mef_p(z)>0$ for every  $z\in\ball{x}{\delta_*}$. Recall that
$\Fmap(p,\cdot)$ is $\Cmap(p)$-increasing at $x$ and notice that, as it was
chosen
$$
  \alpha>\frac{\widetilde{\alpha}_{\Fmap,\Cmap,\Rmap}-\ell+1}{2},
$$
one has
$$
  2(\widetilde{\alpha}_{\Fmap,\Cmap,\Rmap}-\alpha)-\ell+1
  <\widetilde{\alpha}_{\Fmap,\Cmap,\Rmap}.
$$
Moreover it holds
$$
   2(\widetilde{\alpha}_{\Fmap,\Cmap,\Rmap}-\alpha)-\ell+1>1,
$$
because $\widetilde{\alpha}_{\Fmap,\Cmap,\Rmap}-\alpha>\ell>\frac{\ell}{2}$.
Thus, according to Definition \ref{def:Cincrpropdef}, there exists $\bar\delta_\alpha>0$
such that for every $r\in (0,\bar\delta_\alpha]$ there is $u\in\ball{x}{r}$
such that
$$
  \ball{\Fmap(p,u)}{[2(\widetilde{\alpha}_{\Fmap,\Cmap,\Rmap}-\alpha)-\ell+1]r}
  \subseteq \ball{\Fmap(p,x)+\Cmap(p)}{r}.
$$
Then, if choosing $r\in (0,\min\{\delta_*,\bar\delta_\alpha\})$ and
arguing as in the proof of Proposition \ref{pro:glosolv}, one gets the
existence of $u\in\ball{x}{r}\backslash\{x\}$ such that
\begin{eqnarray*}
  \mef_p(u) &=& \exc{\ball{\Fmap(p,u)}{[2(\widetilde{\alpha}_{\Fmap,\Cmap,\Rmap}-\alpha)-\ell+1]r}}{\Cmap(p)} \\
  & & -[2(\widetilde{\alpha}_{\Fmap,\Cmap,\Rmap}-\alpha)-\ell+1]r \\
   &\le& \mef_p(x)-[2(\widetilde{\alpha}_{\Fmap,\Cmap,\Rmap}-\alpha)-\ell]r.
\end{eqnarray*}
As it is $d(u,x)\le r$, from the above inequality it follows
\begin{equation}    \label{in:mefpCaristiapa}
  \mef_p(u)\le\mef_p(x)-[2(\widetilde{\alpha}_{\Fmap,\Cmap,\Rmap}-\alpha)-\ell]d(u,x).
\end{equation}
By taking into account that the function
$x\mapsto(\widetilde{\alpha}_{\Fmap,\Cmap,\Rmap}-\alpha)\dist{x}{\Rmap(p)}$
is Lipschitz continuous on $X$ with constant $\widetilde{\alpha}_{\Fmap,\Cmap,\Rmap}-\alpha$,
the inequality in (\ref{in:mefpCaristiapa}) implies
\begin{eqnarray*}
  \cmef_p(u)+(\widetilde{\alpha}_{\Fmap,\Cmap,\Rmap}-\alpha-\ell)d(u,x) &=&
  \mef_p(u)+(\widetilde{\alpha}_{\Fmap,\Cmap,\Rmap}-\alpha)\dist{u}{\Rmap(p)}  \\
   & & +(\widetilde{\alpha}_{\Fmap,\Cmap,\Rmap}-\alpha-\ell)d(u,x) \\
   &\le& \mef_p(u)+(\widetilde{\alpha}_{\Fmap,\Cmap,\Rmap}-\alpha)\dist{x}{\Rmap(p)} \\
   & & +(\widetilde{\alpha}_{\Fmap,\Cmap,\Rmap}-\alpha)d(u,x)  \\
   & & +(\widetilde{\alpha}_{\Fmap,\Cmap,\Rmap}-\alpha-\ell)d(u,x)  \\
   &\le& \mef_p(x)-[2(\widetilde{\alpha}_{\Fmap,\Cmap,\Rmap}-\alpha)-\ell]d(u,x) \\
   & & +(\widetilde{\alpha}_{\Fmap,\Cmap,\Rmap}-\alpha)\dist{x}{\Rmap(p)}  \\
   & & +(\widetilde{\alpha}_{\Fmap,\Cmap,\Rmap}-\alpha)d(u,x)  \\
   & & +(\widetilde{\alpha}_{\Fmap,\Cmap,\Rmap}-\alpha-\ell)d(u,x)  \\
   &=& \mef_p(x)+(\widetilde{\alpha}_{\Fmap,\Cmap,\Rmap}-\alpha)\dist{x}{\Rmap(p)} \\
   &=& \cmef_p(x).
\end{eqnarray*}
This inequality shows that $\cmef_p$ fulfils the Caristi-type
condition $(\mathfrak{C})$, with $\hat x=u$
and $\kappa=\widetilde{\alpha}_{\Fmap,\Cmap,\Rmap}-\alpha-\ell$. Consequently,
what recalled in Remark \ref{rem:Carcond} ensures the existence of $x_\kappa\in X$
such that
\begin{equation}   \label{eq:xksolCSVIp}
  \mef_p(x_\kappa)=0, \qquad \dist{x_\kappa}{\Rmap(p)}=0,
\end{equation}
and
\begin{equation}    \label{in:xksolCSVIp}
  d(x_\kappa,x_0)\le
  \frac{\cmef_p(x_0)}{\widetilde{\alpha}_{\Fmap,\Cmap,\Rmap}-\alpha-\ell}.
\end{equation}
As $\Cmap(p)$ and $\Rmap(p)$ are closed sets,
from (\ref{eq:xksolCSVIp}) one deduces that $x_\kappa\in\CSmap(p)$ and,
on account of (\ref{in:xksolCSVIp}), one achieves the inequality in the
thesis.

\vskip.25cm

\noindent{\sc case II:} $x\in X\backslash\Rmap(p)$ and hence $\dist{x}{\Rmap(p)}>0$.

Since by hypothesis (iii) $\Fmap:P\times X\rightrightarrows\Y$ is Lipschitz
continuous with respect to $x$, uniformly on $P$, with constant $\ell$,
Remark \ref{rem:scontmef}(iii) ensures that function $\mef_p$ turns out to be
Lipschitz continuous with constant $\ell$. Thus, one has
\begin{equation}    \label{in:mefcalmabx}
  \mef_p(z)-\mef_p(x)\le\ell d(z,x),\quad\forall
  z\in  X\backslash\Rmap(p).
\end{equation}
Since $\Rmap(p)$ is supposed to be proximinal (by hypothesis (iv)), there
exists $z_x\in\Rmap(p)$ such that $d(z_x,x)=\dist{x}{\Rmap(p)}$.
By recalling that $X$ is a metrically convex space (remember assumption
$(\mathfrak{A}_3)$) so Lemma \ref{lem:metsegprox} applies, one
can state the existence of $u\in X\backslash\{x\}$ such that
$$
   \dist{u}{\Rmap(p)}=\dist{x}{\Rmap(p)}-d(u,x).
$$
Observe that, as $u$ can be chosen sufficiently close to $x$, it is
$u\in X\backslash\Rmap(p)$.
As a consequence, on account of the inequality in (\ref{in:mefcalmabx}),
one obtains
\begin{eqnarray*}
  \cmef_p(u) &=& \mef_p(u)+(\widetilde{\alpha}_{\Fmap,\Cmap,\Rmap}-\alpha)\dist{u}{\Rmap(p)} \\
   &=& \mef_p(u)+(\widetilde{\alpha}_{\Fmap,\Cmap,\Rmap}-\alpha)\dist{x}{\Rmap(p)}
     -(\widetilde{\alpha}_{\Fmap,\Cmap,\Rmap}-\alpha)d(u,x) \\
   &\le& \mef_p(x)+\ell d(u,x)+(\widetilde{\alpha}_{\Fmap,\Cmap,\Rmap}-\alpha)\dist{x}{\Rmap(p)}
      -(\widetilde{\alpha}_{\Fmap,\Cmap,\Rmap}-\alpha)d(u,x)   \\
   &=& \cmef_p(x)-[\widetilde{\alpha}_{\Fmap,\Cmap,\Rmap}-\alpha-\ell]d(u,x),
\end{eqnarray*}
whence it follows
$$
 \cmef_p(u)+[\widetilde{\alpha}_{\Fmap,\Cmap,\Rmap}-\alpha-\ell]d(u,x)\le\cmef_p(x).
$$
With the last inequality, $\cmef_p$ is shown to fulfil the Caristi-type condition $(\mathfrak{C})$
also in the current case, so the same conclusions as in previous case
can be drawn. This completes the proof.
\end{proof}

\begin{remark}
It is worth observing that, in the particular case of $p\in\Rmap^{-1}(x_0)$,
the inequality in (\ref{in:gloconsolvthesis}) becomes
$$
   \dist{x_0}{\CSmap(p)}\le
  {\exc{\Fmap(p,x_0)}{\Cmap(p)}\over\widetilde{\alpha}_{\Fmap,\Cmap,\Rmap}-\alpha-\ell}.
$$
If taking into account that $\alpha>\frac{1}{2}(\widetilde{\alpha}_{\Fmap,\Cmap,\Rmap}-\ell+1)$
and hence $\widetilde{\alpha}_{\Fmap,\Cmap,\Rmap}-\alpha-\ell<\alpha-1$, one sees
that the resulting error bound estimate is worse than the one
provided in (\ref{in:glosolvthesis}), that is in the unconstrained
case. Thus, the major complication of $(\CSVI_p)$ in comparison
with $(\SVI_p)$, caused by the presence of constraints, results in
a looser error bound in case $p\in\Rmap^{-1}(x_0)$.
On the other hand, whenever $\Rmap$ happens to take the constant
value $X$ (as in the unconstrained case), then case II in the proof
of Proposition \ref{pro:gloconsolv} can not take place. In such an
event, hypothesis (iii) of that proposition becomes redundant and
can be considerably relaxed. In fact, the Lipschitz continuity of
$\Fmap$ and the relation between $\ell$ and $\widetilde{\alpha}_{\Fmap,\Cmap,\Rmap}$
are not actually employed in case I, whereas a lower semicontinuity
assumption on $\Fmap(p,\cdot)$ seems to be adequate.
Therefore, in this special
circumstance an improvement of the error bound estimate is expected.
To summarize, the price to be paid for handling problems in a
major generality consists in a minor accuracy of the error bound
estimate.
\end{remark}

The conditions for global solvability achieved above lead to the
following implicit function theorem for $(\CSVI_p)$. Like in the
unconstrained case, it will be established in the particular
case in which $\Cmap$ takes the constant value $C\subset\Y$,
while the metric space $X$ is specialized to be a (complete,
according to $(\mathfrak{A}_0)$) normed space $\X$,
because of the employment of convexity assumptions.

\begin{theorem}[{\bf Global implicit function}]    \label{thm:congloimpfun}
With reference to a parameterized family of set-valued inclusion
problems $(\CSVI_p)$, suppose that:

\begin{itemize}

\item[(i)] $(P,\tau)$ is a paracompact topological space and
$(\X,\|\cdot\|)$ is a Banach space;

\item[(ii)] for every $p\in P$ there exists $\hat x_p\in\X$ such that
$\exc{\Fmap(p,\hat x_p)}{C}<+\infty$;

\item[(iii)] it is $\widetilde{\alpha}_{\Fmap,\Cmap,\Rmap}>1$;

\item[(iv)] $\Fmap(p,\cdot):\X\rightrightarrows\Y$ is l.s.c. on $\X$
for every $p\in P$, and Lipschitz continuous on $\X\backslash\Rmap(p)$,
with a constant $0<\ell<\widetilde{\alpha}_{\Fmap,\Cmap,\Rmap}-1$
uniform on $P$;

\item[(v)] $\Fmap(p,\cdot):\X\rightrightarrows\Y$ is $C$-concave
on $\X$, for every $p\in P$;

\item[(vi)] $\Fmap(\cdot,x):P\rightrightarrows\Y$ is Hausdorff $C$-u.s.c. on $P$,
for every $x\in\X$;

\item[(vii)] $\Rmap:P\rightrightarrows\X$ is l.s.c. on $P$ and
takes proximinal convex values.

\end{itemize}
Then, $\dom\CSmap=P$ and there exists a function $\cimpf:P\longrightarrow\X$,
which is continuous on $P$ and such that
$$
  \cimpf(p)\in\CSmap(p),\quad\forall p\in P.
$$
\end{theorem}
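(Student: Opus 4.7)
The plan is to mirror the strategy adopted for Theorem \ref{thm:gloimpfun}: I will show that $\CSmap$ is nonempty, closed- and convex-valued, and lower semicontinuous on $P$, and then invoke the Michael selection theorem, whose hypotheses are delivered by (i). The first step is a direct application of Proposition \ref{pro:gloconsolv}, whose assumptions are ensured by (ii)--(iv) and (vii): this yields $\dom\CSmap=P$ and, after fixing any admissible $\alpha$, the quantitative error bound
\[
\dist{z}{\CSmap(p)}\le\frac{\exc{\Fmap(p,z)}{C}+(\widetilde{\alpha}_{\Fmap,\Cmap,\Rmap}-\alpha)\dist{z}{\Rmap(p)}}{\widetilde{\alpha}_{\Fmap,\Cmap,\Rmap}-\alpha-\ell},\qquad\forall\, (p,z)\in P\times\X,
\]
which will drive the entire continuity argument.

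The closedness and convexity of the values of $\CSmap$ are routine: writing $\CSmap(p)=\Smap(p)\cap\Rmap(p)$, Lemma \ref{lem:Smapcloseconvex}(i) together with hypothesis (iv) gives that $\Smap(p)$ is closed, while $\Rmap(p)$ is closed as a proximinal set, so their intersection is closed; likewise, Lemma \ref{lem:Smapcloseconvex}(ii) together with hypothesis (v) gives $\Smap(p)$ convex, and $\Rmap(p)$ is convex by (vii), hence $\CSmap(p)$ is convex as well.

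The main obstacle is the lower semicontinuity of $\CSmap$ on $P$, where both (vi) and (vii) enter simultaneously through the error bound. Fixing $p_0\in P$ and an open $A\subseteq\X$ with $\CSmap(p_0)\cap A\ne\varnothing$, I will pick $x_0\in\CSmap(p_0)\cap A$ and $r_0>0$ with $\ball{x_0}{r_0}\subseteq A$. Since $x_0\in\CSmap(p_0)$, both $\exc{\Fmap(p_0,x_0)}{C}=0$ and $\dist{x_0}{\Rmap(p_0)}=0$. By hypothesis (vi) and Remark \ref{rem:scontmef}(ii), $p\mapsto\exc{\Fmap(p,x_0)}{C}$ is u.s.c. at $p_0$; by hypothesis (vii) and Remark \ref{rem:uscdistp}, $p\mapsto\dist{x_0}{\Rmap(p)}$ is u.s.c. at $p_0$. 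Hence a neighbourhood $U$ of $p_0$ can be selected on which each of these two quantities is small enough to force the right-hand side of the error bound to be at most $r_0/2$. Consequently $\ball{x_0}{r_0}\cap\CSmap(p)\ne\varnothing$, and therefore $A\cap\CSmap(p)\ne\varnothing$ for all $p\in U$, establishing the lower semicontinuity of $\CSmap$. Once this is achieved, hypothesis (i) (paracompactness of $P$ and the Banach structure of $\X$) together with the closed, convex and nonempty values of $\CSmap$ makes Michael's theorem \cite{Mich56} applicable, delivering the continuous selection $\cimpf:P\longrightarrow\X$.
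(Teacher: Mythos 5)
Your proposal is correct and follows essentially the same route as the paper: Proposition \ref{pro:gloconsolv} for global solvability and the error bound, closed and convex values plus lower semicontinuity of $\CSmap$ via the upper semicontinuity of $p\mapsto\exc{\Fmap(p,x_0)}{C}$ and $p\mapsto\dist{x_0}{\Rmap(p)}$, and then the Michael selection theorem. The only (harmless) difference is that you obtain closedness and convexity of $\CSmap(p)$ from the decomposition $\CSmap(p)=\Smap(p)\cap\Rmap(p)$ and Lemma \ref{lem:Smapcloseconvex}, whereas the paper argues directly on the merit function $\cmef(p,\cdot)$, showing it is Lipschitz continuous and convex so that its zero sublevel set is closed and convex.
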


\begin{proof}
Since it is possible to invoke Proposition \ref{pro:gloconsolv},
all the required hypotheses being satisfied, one has that $\dom\CSmap
=P$ and, fixed $\alpha$ as in the thesis of the aforementioned
proposition, for any $z\in\X$ it holds
\begin{equation}    \label{in:erboconsolv}
  \dist{z}{\CSmap(p)}\le
  {\exc{\Fmap(p,z)}{C}+(\widetilde{\alpha}_{\Fmap,\Cmap,\Rmap}-\alpha)
  \dist{z}{\Rmap(p)}\over
  \widetilde{\alpha}_{\Fmap,\Cmap,\Rmap}-\alpha-\ell},\quad\forall p\in P.
\end{equation}
In order to apply the Michael selection theorem, one has to check
that $\CSmap$ takes closed and convex values and that $\CSmap:P\rightrightarrows\X$
is l.s.c. on $P$. So, fix an arbitrary $p\in P$ and consider
the function $\cmef:P\times\X\longrightarrow[0,+\infty]$ as defined
by
$$
  \cmef(p,x)={\exc{\Fmap(p,x)}{C}+(\widetilde{\alpha}_{\Fmap,\Cmap,\Rmap}-\alpha)
  \dist{x}{\Rmap(p)}\over
  \widetilde{\alpha}_{\Fmap,\Cmap,\Rmap}-\alpha-\ell}.
$$
Since $\CSmap(p)=[\cmef(p,\cdot)\le 0]$ and,
under the current hypotheses, $\cmef(p,\cdot)$ is Lipschitz continuous on $\X$
(remember the proof of Proposition \ref{pro:gloconsolv}), then set
$\CSmap(p)$ is closed. In a similar manner, by observing that the functions
$x\mapsto\exc{\Fmap(p,x)}{C}$ and $x\mapsto\dist{x}{\Rmap(p)}$ are both
convex on $\X$ by the $C$-concavity of $\Fmap(p,\cdot)$ and the
convexity of $\Rmap(p)$, respectively, one sees that also their convex
combination $\cmef(p,\cdot)$ is convex, with the consequence that
its sublevel set $\CSmap(p)$ is convex.
The lower semicontinuity of $\CSmap$ can be shown by adapting the
argument exposed in the proof of Theorem \ref{thm:gloimpfun}.
To see how in detail, fix arbitrary $p_0\in P$ and $A\subseteq\X$,
with $A$ open and such that $\CSmap(p_0)\cap A\ne\varnothing$.
If $x_0\in\CSmap(p_0)\cap A$, there exists $r_0>0$ such that
$\ball{x_0}{r_0}\subseteq A$. Since by virtue of hypothesis (vi)
the set-valued mapping $\Fmap(\cdot,x_0)$ is Hausdorff $C$-u.s.c.
on $P$ and, by virtue of hypothesis (vii), $\Rmap$ is l.s.c. on $P$,
the function $p\mapsto\cmef(p,x_0)=\exc{\Fmap(p,x_0)}{C}+
(\widetilde{\alpha}_{\Fmap,C,\Rmap}-\alpha)\dist{x_0}{\Rmap(p)}$
turns out to be u.s.c., in particular at $p_0$ (recall Remark
\ref{rem:scontmef}(ii) and Remark \ref{rem:uscdistp}).
This means that there exists a neighbourhood $U$ of $p_0$ such that
$$
  \cmef(p,x_0) \le \cmef(p_0,x_0)+
  \frac{(\widetilde{\alpha}_{\Fmap,\Cmap,\Rmap}-\alpha-\ell)r_0}{2}
  =\frac{(\widetilde{\alpha}_{\Fmap,\Cmap,\Rmap}-\alpha-\ell)r_0}{2},
  \quad\forall p\in U.
$$
Thus, by exploting the error bound in (\ref{in:erboconsolv})
with $z=x_0$, one finds
$$
  \dist{x_0}{\CSmap(p)}\le{\cmef(p,x_0)\over \widetilde{\alpha}_{\Fmap,\Cmap,\Rmap}-\alpha-\ell}
  \le\frac{r_0}{2},\quad\forall p\in U.
$$
This inequality entails $\ball{x_0}{r_0}\cap\CSmap(p)\ne\varnothing$ and
hence $A\cap\CSmap(p)\ne\varnothing$ for every $p\in U$.
Thus, by applying the Michael selection theorem one achieves
all the assertions in the thesis.
\end{proof}

\vskip1cm


\section{Applications to the stability analysis of ideal efficiency
in parametric vector optimization} \label{Sect:4}

With the aim of illustrating the application range of the theory exposed
in Section \ref{Sect:3}, in the present section a qualitative analysis
of the solution stability is conducted for finite-dimensional
vector optimization problems that can be put in the following
form
$$
  \Cmap(p)\hbox{-}\min f(p,x)\ \hbox{ subject to }\ x\in\Rmap(p). \leqno (\VOP_p)
$$
The data defining the above parameterized class of vector optimization problems are
a constraining set-valued mapping $\Rmap:P\rightrightarrows\R^n$, which
describes the feasible region of $(\VOP_p)$ under parameter perturbation; a set-valued mapping
$\Cmap:P\rightrightarrows\R^m$ expressing a partial order over $\R^m$
in the standard way (i.e. $y_1\le_{\Cmap(p)}y_2$ iff $y_2-y_1\in\Cmap(p)$),
which varies with the parameter; a mapping $f:P\times\R^n\longrightarrow\R^m$
representing the vector objective function. Problems of this kind are
the subject of comprehensive theoretical studies in multicriteria optimization
and multiobjective programming (see \cite{Jahn11,Luc89,SaNaTa85}).

Throughout the current section, to the standing assumptions $(\mathfrak{A}_0)-(\mathfrak{A}_2)$
the following ones are added:
\begin{itemize}

\item[$(\mathfrak{A}_4)$] $\Cmap:P\rightrightarrows\Y$ takes pointed values;

\item[$(\mathfrak{A}_5)$] $\Rmap:P\rightrightarrows\Y$ takes nonempty and closed values.

\end{itemize}
Notice that $(\mathfrak{A}_0)$ and $(\mathfrak{A}_3)$ automatically hold, as $X=\R^n$. Assumption
$(\mathfrak{A}_5)$ allows to guarantee, in the finite-dimensional setting,
the proximinality of the values of $\Rmap$ once and for all.

Among various solution notions one can consider in connection with
vector optimization problems, here the focus will be on ideal efficient solutions.
Fixed $\bar p\in P$, $\bar x\in\Rmap(\bar p)$ is said to be a (global) ideal efficient
solution to $(\VOP_ {\bar p})$ if
$$
   f(\bar p,\bar x)\le_{\Cmap(\bar p)}f(\bar p,x),\quad\forall
   x\in\Rmap(\bar p)
$$
or, equivalently, it holds
$$
  f(\bar p,\Rmap(\bar p))-f(\bar p,\bar x)\subseteq\Cmap(\bar p).
$$
While the perturbation analysis of vector optimization problems
has mainly concentrated on efficient and weak efficient solutions
(see, among the other, \cite{Bedn95,CaLoMoPa20,CraLuu00,HuMoYa08,Luc89,SaNaTa85,Tani90}),
much less is known about the stability properties of ideal efficiency.
Solvability results in the case of ideal efficiency are presented in \cite{Flor02}.
Some recent results about the quantitative stability (in terms
of Lipschitz lower semicontinuity and calmness) of ideal
efficient solutions can be found in \cite{Uder23}. The investigations
here exposed can be regarded as a complement of them.
As the focus is on the ideal efficiency, the solution set-valued mapping
$\IEmap:P\rightrightarrows\R^n$ associated with $(\VOP_ {\bar p})$
becomes
$$
  \IEmap(p)=\{x\in\Rmap(p):\ f(p,\Rmap(p))-f(p,x)
  \subseteq\Cmap(p)\}.
$$
Along with the solution mapping, another fundamental element of the perturbation
analysis is the value mapping associated with $(\VOP_p)$, henceforth denoted by
$\val$, i.e.
$$
    \val(p)=f(p,\IEmap(p)),\quad p\in\dom\IEmap.
$$
It should be noticed that $\val:\dom\IEmap\longrightarrow\R^m$ is a single-valued mapping, even
for those $p$ for which the set $\IEmap(p)$ contains more than one
element. Indeed, the fact that $\Cmap(p)$ is pointed (according to $(\mathfrak{A}_4)$)
entails that $f(p,x)$ takes the same value in $\R^m$ for every $x\in\IEmap(p)$.
This is a remarkable feature distinguishing ideal efficiency from
the mere efficiency. Indeed, for the latter type of solution
the values of the objective function give raise to the so-called efficient
front mapping, which is generally a set-valued mapping.

In order to put the qualitative analysis of problems $(\VOP_p)$ in the framework
of the theory exposed in Section \ref{Sect:3}, let us introduce the set-valued mapping
$\VOPmap:P\times\X\longrightarrow\Y$ defined as follows
$$
  \VOPmap(p,x)=f(p,\Rmap(p))-f(p,x).
$$

Given a closed, convex cone $\{\nullv\}\ne C\subsetneqq\R^m$, a mapping
$g:\R^n\longrightarrow\R^m$ is said to be $C$-decreasing at $x_0\in\R^n$
if the mapping $-g$ is $C$-increasing at the same point. In such an event, the
value
$$
   \dec{g}{C}{x_0}=\inc{-g}{C}{x_0}.
$$
is called {\it exact bound of $C$-decrease} of $g$ at $x_0$. In other
terms, $g$ is $C$-decreasing at $x_0$ if there exist $\delta>0$ and
$\alpha>1$ such that
\begin{equation}    \label{in:defCdecr}
  \forall r\in (0,\delta]\ \exists u\in\ball{x_0}{r}:\
  \ball{g(u)}{\alpha r}\subseteq \ball{g(x_0)-C}{r}.
\end{equation}

On the base of the solvability results achieved in Section \ref{Sect:3},
the next proposition provide sufficient conditions for the existence
of ideal efficient solutions and a related error bound, in the presence of global variations
of the parameter, which affect all problem data (including the partial order).
A key role in its formulation is played by the
following constant:
$$
  \underline{\alpha}_{f,\Cmap,\Rmap}=\inf\bigl\{
  \dec{f(p,\cdot)}{\Cmap(p)}{x}:\ (p,x)\in P\times\R^n,\
  \VOPmap(p,x)\not\subseteq\Cmap(p),\ x\in\Rmap(p)\bigl\}.
$$

\begin{proposition}{\bf (Global ideal solvability)}     \label{pro:gloisolv}
With reference to a family of problems $(\VOP_p)$, suppose that:
\begin{itemize}

\item[(i)] for every $p\in P$, the set $f(p,\Rmap(p))$ is closed
and $\Cmap(p)$-bounded;

\item[(ii)] it is $\underline{\alpha}_{f,\Cmap,\Rmap}>1$;

\item[(iii)] $f(p,\cdot)$ is continuous on $\R^n$ for
every $p\in P$ and
Lipschitz continuous on $\R^n\backslash\Rmap(p)$, with a constant
$0<\ell_f<\underline{\alpha}_{f,\Cmap,\Rmap}-1$, uniform in $p\in P$.

\end{itemize}
Then, it is $\IEmap(p)\ne\varnothing$ for every $p\in P$, and for every
$\alpha\in\left({\underline{\alpha}_{f,\Cmap,\Rmap}-\ell_f+1\over 2},
{\underline{\alpha}_{f,\Cmap,\Rmap}-\ell_f}\right)$
and $x_0\in\R^n$ it holds
\begin{eqnarray}     \label{in:condlfalf1}
   \dist{x_0}{\IEmap(p)} &\le & {\exc{f(p,\Rmap(p)-f(p,x_0)}{\Cmap(p)}+
   (\underline{\alpha}_{f,\Cmap,\Rmap}-\alpha)\dist{x_0}{\Rmap(p)}\over
   \underline{\alpha}_{f,\Cmap,\Rmap}-\alpha-\ell_f}, \nonumber \\
   & &\quad\forall p\in P.
\end{eqnarray}
\end{proposition}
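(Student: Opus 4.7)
The plan is to recast the ideal efficiency problem as a constrained set-valued inclusion and then invoke Proposition \ref{pro:gloconsolv}. By the very definition of an ideal efficient solution, $\IEmap(p)=\CSmap(p)$ when $(\CSVI_p)$ is instantiated with $\Fmap(p,x)=\VOPmap(p,x)=f(p,\Rmap(p))-f(p,x)$. So it suffices to check the hypotheses of Proposition \ref{pro:gloconsolv} for this choice of $\Fmap$ and to reconcile the two error bound formulae.

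The central technical point is the identification
\[
  \inc{\VOPmap(p,\cdot)}{\Cmap(p)}{x}=\dec{f(p,\cdot)}{\Cmap(p)}{x}.
\]
This holds because $\VOPmap(p,\cdot)$ differs from $-f(p,\cdot)$ only by the set $f(p,\Rmap(p))$, which is constant in $x$, so adding it to both sides of the inclusion in (\ref{def:Cincrprop}) (in the equivalent form for $-f(p,\cdot)$) simply translates them by the same quantity. Moreover, $\VOPmap(p,x)\not\subseteq\Cmap(p)$ says precisely that $x\in\Rmap(p)$ fails to be ideal efficient for $(\VOP_p)$, so the defining condition of $\widetilde{\alpha}_{\VOPmap,\Cmap,\Rmap}$ matches the one of $\underline{\alpha}_{f,\Cmap,\Rmap}$. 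Combining these two observations gives $\widetilde{\alpha}_{\VOPmap,\Cmap,\Rmap}\ge\underline{\alpha}_{f,\Cmap,\Rmap}>1$.

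The remaining verifications are straightforward. Continuity of $f(p,\cdot)$ on $\R^n$ yields lower semicontinuity of $\VOPmap(p,\cdot)$, and Lipschitz continuity of $f(p,\cdot)$ on $\R^n\setminus\Rmap(p)$ transfers to $\VOPmap(p,\cdot)$ with the same constant $\ell_f$, via the identity $\haus{\VOPmap(p,x_1)}{\VOPmap(p,x_2)}=\|f(p,x_1)-f(p,x_2)\|$. The $\Cmap(p)$-boundedness of $f(p,\Rmap(p))$ yields, upon selecting any $\hat x_p\in\Rmap(p)$, that $\exc{\VOPmap(p,\hat x_p)}{\Cmap(p)}<+\infty$. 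Finally, since the underlying space is $\R^n$, every nonempty closed subset is proximinal, so by $(\mathfrak{A}_5)$ the values of $\Rmap$ are proximinal. All the hypotheses of Proposition \ref{pro:gloconsolv} are thus fulfilled, and $\dom\IEmap=\dom\CSmap=P$.

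The main obstacle is that Proposition \ref{pro:gloconsolv} yields an error bound expressed through $\widetilde{\alpha}_{\VOPmap,\Cmap,\Rmap}$, which may be strictly larger than $\underline{\alpha}_{f,\Cmap,\Rmap}$, whereas (\ref{in:condlfalf1}) and the allowed range of $\alpha$ are stated in terms of the latter. To bridge this, I would re-examine the proof of Proposition \ref{pro:gloconsolv} and observe that the only place $\widetilde{\alpha}_{\Fmap,\Cmap,\Rmap}$ is used is as a uniform lower bound on the $\Cmap(p)$-increase of $\Fmap(p,\cdot)$ at each non-solution point $x\in\Rmap(p)$; by the identification in the preceding paragraph, $\underline{\alpha}_{f,\Cmap,\Rmap}$ serves as such a lower bound for $\VOPmap$. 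Re-running the Caristi-type argument verbatim with $\underline{\alpha}_{f,\Cmap,\Rmap}$ in place of $\widetilde{\alpha}_{\Fmap,\Cmap,\Rmap}$, and for $\alpha$ chosen in the interval prescribed by the statement, produces exactly (\ref{in:condlfalf1}), thereby concluding the proof.
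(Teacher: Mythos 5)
Your proposal is correct and follows essentially the same route as the paper: recast ideal efficiency as the constrained inclusion $(\CSVI_p)$ with $\Fmap=\VOPmap$, verify the hypotheses of Proposition \ref{pro:gloconsolv} (the paper gets the increase bound for $\VOPmap(p,\cdot)$ from Proposition \ref{pro:Cincrprosta} with the constant perturbation $f(p,\Rmap(p))$ of Lipschitz constant $0$, while you argue by direct translation of the inclusions --- note that this only justifies $\inc{\VOPmap(p,\cdot)}{\Cmap(p)}{x}\ge\dec{f(p,\cdot)}{\Cmap(p)}{x}$ rather than the equality you assert, but the inequality is all that is needed), and conclude. Your final paragraph, observing that the Caristi-type argument of Proposition \ref{pro:gloconsolv} runs verbatim with any uniform lower bound on the increase in place of $\widetilde{\alpha}_{\VOPmap,\Cmap,\Rmap}$, so that $\underline{\alpha}_{f,\Cmap,\Rmap}$ can be substituted throughout, is in fact more careful than the paper, which passes over this point by asserting that the thesis follows ``at once.''
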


\begin{proof}
All the assertions in the thesis follow at once by applying
Proposition \ref{pro:gloconsolv}. This can be done under
the current hypotheses. Indeed, as for every $p\in P$ the set
$f(p,\Rmap(p))\backslash\Cmap(p)$ is bounded, then it is clear
that $\exc{f(p,\Rmap(p))\backslash\Cmap(p)}{\Cmap(p)}<+\infty$
and hence, for every fixed $x\in\R^n$, one has
\begin{eqnarray*}
  \exc{\VOPmap(p,x)}{\Cmap(p)} &=& \exc{f(p,\Rmap(p))-f(p,x)}{\Cmap(p)} \\
   &\le& \exc{f(p,\Rmap(p))-f(p,x)}{f(p,\Rmap(p))}    \\
   & &  +\exc{f(p,\Rmap(p))}{\Cmap(p)} \\
   &\le& \|f(p,x)\|+\exc{f(p,\Rmap(p))\backslash\Cmap(p)}{\Cmap(p)}<+\infty.
\end{eqnarray*}
Thus, hypothesis (i) of Proposition \ref{pro:gloconsolv} is fulfilled.

Now, let $p\in P$ be fixed. By virtue of hypothesis (ii), the mapping
$-f(p,\cdot)$ is $\Cmap(p)$-increasing at each $x\in\R^n\backslash\Rmap(p)$
with exact bound $\inc{-f(p,\cdot)}{\Cmap(p)}{x}\ge \underline{\alpha}_{f,\Cmap,\Rmap}$.
Since the set-valued mapping $x\rightrightarrows f(p,\Rmap(p))$ is
constant and therefore Lipschitz continuous with constant $\ell=0$,
then, according to Proposition \ref{pro:Cincrprosta}, the set-valued
mapping $\VOPmap(p,\cdot)=f(p,\Rmap(p))-f(p,\cdot)$ is $\Cmap(p)$-increasing
with exact bound
$$
  \inc{\VOPmap(p,\cdot)}{\Cmap(p)}{x}\ge
  \underline{\alpha}_{f,\Cmap,\Rmap},\quad\forall
  x\in\Rmap(p).
$$
By consequence one obtains
\begin{eqnarray*}
  \widetilde{\alpha}_{\VOPmap,\Cmap,\Rmap} &=& \inf\bigg\{\inc{\VOPmap(p,\cdot)}{\Cmap(p)}{x}:\
  (p,x)\in P\times\R^n,\   \\
  & & \VOPmap(p,x)\not\subseteq\Cmap(p),\ x\in\Rmap(p)\bigg\}
  \ge \underline{\alpha}_{f,\Cmap,\Rmap}>1,
\end{eqnarray*}
which ascertains the fulfilment of hypothesis (ii).

Since $f(p,\cdot)$ is continuous on $\R^n$ and Lipschitz continuous on $\R^n\backslash\Rmap(p)$,
with a uniform constant $\ell_f<\underline{\alpha}_{f,\Cmap,\Rmap}-1$,
$\VOPmap(p,\cdot)$ is l.s.c. on $\R^n$ and Lipschitz continuous on $\R^n\backslash\Rmap(p)$,
by virtue of the invariance under
translation of the Euclidean distance. Thus, also hypothesis (iii)
of Proposition \ref{pro:gloconsolv} happens to be satisfied.

As $\Rmap$ takes nonempty, closed values and every nonempty closed
subset of $\R^n$ is known to be proximinal, Proposition \ref{pro:gloconsolv}
can be actually applied. This completes the proof.
\end{proof}

The example below demonstrates the fact that, in general, the condition
in hypothesis (iii) of Proposition \ref{pro:gloisolv}, linking $\ell_f$
and $\underline{\alpha}_{f,\Cmap,\Rmap}$, is essential to ensure the
global existence of ideal efficient solutions to $(\VOP_p)$.

\begin{example}
Consider the specific instance of $(\VOP_p)$ discussed in \cite[Example 1]{Uder23},
whose data are: $P=[0,2\pi]$, $n=m=2$, $C=\R^2_+$, $f:[0,2\pi]\times\R^2
\longrightarrow\R^2$ given by
$$
   f(p,x)=O_px,\quad\hbox{ with }\quad O_p=
   \left(\begin{array}{cc}
            \cos p & -\sin p \\
            \sin p & \cos p
          \end{array}\right)\in\Rot(2),
$$
and $\Rmap:[0,2\pi]\rightrightarrows\R^2$ constant, namely $\Rmap(p)=T$,
where $T=\{x=(x_1,x_2)\in\R^2:\ x_1\ge 0,\ x_2\ge 0,\ x_1+x_2\le 1\}$.
As the associated solution mapping $\IEmap:[0,2\pi]\rightrightarrows\R^2$
results in
$$
  \IEmap(p)=\left\{\begin{array}{cl} \{(0,0)\} & \quad\hbox{ if } p\in\{0,\, 2\pi\}, \\
                                     \\
                                \{(1,0)\} & \quad\hbox{ if } p\in \left[{\pi\over 2},{3\over 4}\pi\right], \\
                                      \\
                                \{(0,1)\} & \quad\hbox{ if } p\in \left[{5\over 4}\pi,{3\over 2}\pi\right], \\
                                      \\
                                \varnothing & \quad\hbox{ otherwise,}
                   \end{array}\right.
$$
one sees that such a parameterized family of  $(\VOP_p)$ lacks
of global ideal solvability.
On the other hand, the set $f(p,T)=O_p(T)$ is trivially closed and bounded.
Moreover, on the base of what discussed in Example \ref{ex:rescarot},
one has
$$
  \inc{-f(p,\cdot)}{\R^2_+}{x}=\frac{1}{\sqrt{2}}+1,\quad\forall
  x\in\R^2,
$$
and therefore, in this particular case,
$$
   \underline{\alpha}_{f,\R^2_+,\Rmap}=
   \inf_{(p,x)\in[0,2\pi]\times\R^2}\dec{f(p,\cdot)}{\R^2_+}{x}
   =\frac{1}{\sqrt{2}}+1.
$$
As a linear mapping, $f(p,\cdot)$ is automatically continuous on $\R^n$.
Nonetheless, as the smallest Lipschitz constant $\ell_f$ of $f(p,\cdot)$,
uniformly in $[0,2\pi]$, is $\ell_f=1$ (remember that $O_p\Usfer=\Usfer$),
one finds
$$
   \underline{\alpha}_{f,\R^2_+,\Rmap}-1=
   \frac{1}{\sqrt{2}}<1=\ell_f,
$$
so that condition in hypothesis (iii) of Proposition \ref{pro:gloisolv}
fails to hold.
\end{example}

The next lemma is ancillary to the proof of the implicit function theorem
related to $(\VOP_p)$.

\begin{lemma}    \label{lem:fCconvFCconc}
Given a family of problems $(\VOP_p)$, let $f(p,\cdot)$ be $\Cmap(p)$-convex on $\R^n$,
for any $p\in P$. Then, $\VOPmap(p,\cdot):\R^n\rightrightarrows\R^m$ is
$\Cmap(p)$-concave in $\R^n$.
\end{lemma}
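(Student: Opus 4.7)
The plan is to unfold both definitions carefully and observe that the desired inclusion reduces to a single application of the $\Cmap(p)$-convexity of $f(p,\cdot)$, with the subtraction term $f(p,\Rmap(p))$ producing no obstruction because the same element of $f(p,\Rmap(p))$ can be reused on both sides of the decomposition.

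Concretely, I fix $p\in P$, points $x_1,x_2\in\R^n$, and a scalar $t\in[0,1]$, and take an arbitrary element $z\in\VOPmap(p,tx_1+(1-t)x_2)$. By the definition of $\VOPmap$, there is $y\in f(p,\Rmap(p))$ with $z=y-f(p,tx_1+(1-t)x_2)$. I then set $a_i=y-f(p,x_i)\in\VOPmap(p,x_i)$ for $i=1,2$, using the very same $y$ in both, and define
\[
  c=t f(p,x_1)+(1-t)f(p,x_2)-f(p,tx_1+(1-t)x_2).
\]
The assumed $\Cmap(p)$-convexity of $f(p,\cdot)$ gives exactly $c\in\Cmap(p)$. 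A direct expansion then yields $ta_1+(1-t)a_2+c=y-f(p,tx_1+(1-t)x_2)=z$, showing $z\in t\VOPmap(p,x_1)+(1-t)\VOPmap(p,x_2)+\Cmap(p)$.

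There is essentially no obstacle here: the only thing to notice is that one need not split the element $y\in f(p,\Rmap(p))$ between the two convex pieces, since any $y\in S$ equals $ty+(1-t)y\in tS+(1-t)S$ trivially, so the set-valued nature of $\VOPmap(p,\cdot)$ imposes no extra requirement beyond the $\Cmap(p)$-convexity of $f(p,\cdot)$ itself. The argument is valid for every $p\in P$, yielding the $\Cmap(p)$-concavity of $\VOPmap(p,\cdot)$ on $\R^n$.
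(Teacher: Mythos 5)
Your proof is correct and follows essentially the same route as the paper: the paper combines the inclusion $-f(p,tx_1+(1-t)x_2)\in -tf(p,x_1)-(1-t)f(p,x_2)+\Cmap(p)$ with the set inclusion $f(p,\Rmap(p))\subseteq tf(p,\Rmap(p))+(1-t)f(p,\Rmap(p))$, and your element-wise reuse of the same $y$ via $y=ty+(1-t)y$ is exactly the latter observation written pointwise.
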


\begin{proof}
Take arbitrary $x_1,\, x_2\in\R^n$ and $t\in [0,1]$. By virtue of
the $\Cmap(p)$-convexity of $f(p,\cdot)$ one has
$$
  -f(p,tx_1+(1-t)x_2)\in -tf(p,x_1)-(1-t)f(p,x_2)+\Cmap(p).
$$
Besides, as $p\in P$ is fixed, it holds
$$
  f(p,\Rmap(p))\subseteq t f(p,\Rmap(p))+(1-t)f(p,\Rmap(p)).
$$
By combining the above two inclusions, one obtains
\begin{eqnarray*}
  \VOPmap(p,tx_1+(1-t)x_2) &=& f(p,\Rmap(p))-f(p,tx_1+(1-t)x_2) \\
  &\subseteq & t f(p,\Rmap(p))+(1-t)f(p,\Rmap(p))  \\
  & & -tf(p,x_1)-(1-t)f(p,x_2)+\Cmap(p) \\
  &=& t\VOPmap(p,x_1)+(1-t)\VOPmap(p,x_2)+\Cmap(p),
\end{eqnarray*}
which completes the proof.
\end{proof}

The next theorem provides conditions able to ensure the existence
of ideal efficient solutions to $(\VOP_p)$ for perturbations of $p$ all over $P$
as well as their continuous dependence on $p\in P$. This
result is established in the particular case of $\Cmap$ being
constantly $C$.

\begin{theorem}{\bf (Global continuous dependence)}    \label{thm:glocontief}
With reference to a parameterized family of problems $(\VOP_p)$,
suppose that:

\begin{itemize}

\item[(i)] $(P,\tau)$ is a paracompact topological space;

\item[(ii)] for every $p\in P$, the set $f(p,\Rmap(p))$ is closed
and $C$-bounded;

\item[(iii)] it is $\underline{\alpha}_{f,C,\Rmap}>1$;

\item[(iv)] $f(p,\cdot)$ is continuous on $\R^n$ for every $p\in P$ and
Lipschitz continuous on $\R^n\backslash\Rmap(p)$, with a constant
$\ell_f$ such that $0<\ell_f<\underline{\alpha}_{f,C,\Rmap}-1$,
uniform on $P$;

\item[(v)] $f(p,\cdot)$ is $C$-convex on $\R^n$, for every $p\in P$;

\item[(vi)] $f(\cdot,x)$ is continuous with respect to $p$ on $P$,
uniformly in $x\in\R^n$;

\item[(vii)] $\Rmap:P\rightrightarrows\R^n$ is continuous on $P$ and
takes convex values.

\end{itemize}
Then, $\dom\IEmap=P$ and there exists a function $\ief:P\longrightarrow\R^n$,
which is continuous on $P$ and such that $\ief(p)\in\IEmap(p)$,
for every $p\in P$.
\end{theorem}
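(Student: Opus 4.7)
The plan is to reduce the theorem to an application of Theorem \ref{thm:congloimpfun}, applied to the parameterized constrained set-valued inclusion problem with data $\Fmap = \VOPmap$, constant target cone $\Cmap(p) \equiv C$, and constraint mapping $\Rmap$. The key observation is that
$$
  \IEmap(p)=\{x\in\Rmap(p):\ \VOPmap(p,x)\subseteq C\}=\CSmap(p),
$$
so any continuous selection of $\CSmap$ furnished by Theorem \ref{thm:congloimpfun} will serve as the desired $\ief$. The task thereby reduces to verifying the seven hypotheses of Theorem \ref{thm:congloimpfun} for the triple $(\VOPmap, C, \Rmap)$.

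Most of these verifications are routine transfers of properties from $f$ to $\VOPmap$. Hypothesis (i) is immediate from the theorem's hypothesis (i) and the Banach structure of $\R^n$. Hypothesis (ii) follows from the $C$-boundedness of $f(p, \Rmap(p))$, along the lines of the proof of Proposition \ref{pro:gloisolv}. Hypothesis (iii) is obtained by applying Proposition \ref{pro:Cincrprosta} to the decomposition $\VOPmap(p, \cdot) = f(p, \Rmap(p)) + (-f(p, \cdot))$, in which the first summand is constant in $x$ and hence Lipschitz with constant $0$; this delivers $\widetilde{\alpha}_{\VOPmap, C, \Rmap} \ge \underline{\alpha}_{f, C, \Rmap} > 1$. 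Hypothesis (iv) is inherited by the same translation-invariance, since l.s.c. and Lipschitz continuity are preserved by translation through a set which is constant in $x$. Hypothesis (v) is precisely Lemma \ref{lem:fCconvFCconc}. Hypothesis (vii) is immediate, since $\Rmap$ continuous is in particular l.s.c., its values are convex by assumption and closed by $(\mathfrak{A}_5)$, and every nonempty closed subset of $\R^n$ is proximinal.

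The main technical obstacle is hypothesis (vi), namely Hausdorff $C$-upper semicontinuity of $\VOPmap(\cdot, x)$ on $P$ for every $x \in \R^n$. Fix $p_0 \in P$, $x \in \R^n$ and $\epsilon > 0$. A typical $y \in \VOPmap(p, x)$ has the form $y = f(p, z) - f(p, x)$ for some $z \in \Rmap(p)$, and the idea is to split the total variation into three controllable pieces. First, the uniform-in-$u$ continuity of $f(\cdot, u)$ at $p_0$ (hypothesis (vi) of the theorem) yields a neighbourhood $U_1$ of $p_0$ on which $\|f(p, u) - f(p_0, u)\| < \epsilon/3$ for every $u \in \R^n$; hence $y$ lies within $2\epsilon/3$ of $f(p_0, z) - f(p_0, x)$, uniformly in $z$. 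Second, the topological upper semicontinuity of $\Rmap$ at $p_0$, contained in hypothesis (vii), already implies its Hausdorff upper semicontinuity, because each $\oball{\Rmap(p_0)}{\delta}$ is an open superset of $\Rmap(p_0)$; hence a neighbourhood $U_2$ of $p_0$ exists on which every $z \in \Rmap(p)$ satisfies $\dist{z}{\Rmap(p_0)} < \delta$, realized by some $z'' \in \bd \Rmap(p_0) \subseteq \Rmap(p_0)$. Third, the Lipschitz continuity of $f(p_0, \cdot)$ on $\R^n \setminus \Rmap(p_0)$, extended by continuity to $\bd \Rmap(p_0)$, yields $\|f(p_0, z) - f(p_0, z'')\| \le \ell_f \delta$, so that $f(p_0, z) - f(p_0, x) \in \VOPmap(p_0, x) + \ell_f \delta\, \Uball \subseteq \VOPmap(p_0, x) + C + \ell_f \delta\, \Uball$. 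Choosing $\delta$ small enough that $2\epsilon/3 + \ell_f \delta < \epsilon$ and setting $U = U_1 \cap U_2$ yields the required inclusion $\VOPmap(p, x) \subseteq \ball{\VOPmap(p_0, x) + C}{\epsilon}$ for all $p \in U$.

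With all seven hypotheses in place, Theorem \ref{thm:congloimpfun} produces a continuous selection $\ief: P \to \R^n$ of $\CSmap = \IEmap$, which is the asserted implicit function.
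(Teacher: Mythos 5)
Your proposal is correct and follows essentially the same route as the paper: reduce to Theorem \ref{thm:congloimpfun} with $\Fmap=\VOPmap$, verify hypotheses (i)--(v) and (vii) by the transfers you describe (Proposition \ref{pro:Cincrprosta} with the constant summand $f(p,\Rmap(p))$, Lemma \ref{lem:fCconvFCconc}, proximinality of closed sets in $\R^n$), and concentrate the real work on the Hausdorff $C$-upper semicontinuity of $\VOPmap(\cdot,x)$, which both you and the paper obtain from the same three ingredients --- uniform continuity of $f$ in $p$, topological u.s.c. of $\Rmap$ via the open enlargements $\oball{\Rmap(p_0)}{\delta}$, and the Lipschitz estimate for $f(p_0,\cdot)$. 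Your pointwise organization of that last step, which applies the Lipschitz bound only between a point of $\R^n\backslash\Rmap(p_0)$ and its nearest point on $\bd\Rmap(p_0)$, is in fact slightly more careful than the paper's set-inclusion computation about where hypothesis (iv) actually provides the Lipschitz constant.
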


\begin{proof}
The proof consists in showing that, in this particular context of
application, one can employ Theorem \ref{thm:congloimpfun} with
$\Fmap=\VOPmap$, so that $\CSmap=\IEmap$ and $\cimpf=\ief$.
From the proof of Proposition \ref{pro:gloisolv}, it is clear that
hypotheses (i)--(iv) of Theorem \ref{thm:congloimpfun} are satisfied
as a consequence of the current hypotheses (i)--(iv).
On account of hypothesis (v), Lemma \ref{lem:fCconvFCconc} implies
that $\VOPmap(p,\cdot)$ is $C$-concave on $\R^n$, for every $p\in P$.
The current hypothesis (vii) entails, in particular, hypothesis (vii)
of Theorem \ref{thm:congloimpfun}.

It remains to show that, upon the current hypotheses, $\VOPmap(\cdot,x):
P\rightrightarrows\R^m$ is Hausdorff $C$-u.s.c. on $P$. To do so,
take an arbitrary $p_0\in P$ and fix $\epsilon>0$. As a consequence
of hypothesis (vi), there exists a neighbourhood $V_\epsilon$ of
$p_0$ such that
$$
  \|f(p,x_1)-f(p_0,x_1)\|\le\frac{\epsilon}{4},\quad\forall
  p\in V_\epsilon, \forall x_1\in\R^n.
$$
Thus, from hypothesis (iv) it follows
\begin{eqnarray*}
  \|f(p,x_1)-f(p_0,x_2)\| &\le& \|f(p,x_1)-f(p_0,x_1)\|+\|f(p_0,x_1)-f(p_0,x_2)\| \\
   &\le& \frac{\epsilon}{4}+\ell_f\|x_1-x_2\|,\quad
   \forall x_1,\, x_2\in\R^n,\quad\forall p\in V_\epsilon.
\end{eqnarray*}
This inequality means
$$
   f(p,x_1)\in f(p_0,x_2)+\left(\frac{\epsilon}{4}+\ell_f\|x_1-x_2\|\right)\Uball,
   \quad \forall x_1,\, x_2\in\R^n,\quad\forall p\in V_\epsilon,
$$
which in turn implies
$$
   f(p,x_1)\in f(p_0,\Rmap(p_0))+\left(\frac{\epsilon}{4}+\ell_f\|x_1-x_2\|\right)\Uball,
   \quad \forall x_1\in\R^n,\ \forall x_2\in\Rmap(p_0),\ \forall p\in V_\epsilon.
$$
As the last inclusion is true for every $x_2\in\Rmap(p_0)$, it is
possible to write
\begin{eqnarray*}
 f(p,x_1) &\in & \bigcap_{x_2\in\Rmap(p_0)} \left[f(p_0,\Rmap(p_0))+
 \left(\frac{\epsilon}{4}+\ell_f\|x_1-x_2\|\right)\Uball\right]  \\
  &\subseteq& f(p_0,\Rmap(p_0))+\left(\frac{\epsilon}{3}+\ell_f\cdot
  \inf_{x_2\in\Rmap(p_0)} \|x_1-x_2\|\right)\Uball  \\
  &=&  f(p_0,\Rmap(p_0))+\left(\frac{\epsilon}{3}+\ell_f
  \dist{x_1}{\Rmap(p_0)}\right)\Uball,\quad
  \forall x_1\in\R^n,\ \forall p\in V_\epsilon.
\end{eqnarray*}
Now, as $\Rmap$ is, in particular, u.s.c. at $p_0$, there exists a
neighbourhood $W_\epsilon$ of $p_0$ such that
$$
  \Rmap(p)\subseteq\inte\ball{\Rmap(p_0)}{{\epsilon\over 2\ell_f}},
 \quad\forall p\in W_\epsilon.
$$
Thus, from the previous inclusions one obtains
\begin{eqnarray*}
  f(p,\Rmap(p)) &\subseteq& \bigcup_{x_1\in\ball{\Rmap(p_0)}{\epsilon/2\ell_f}}
  \left[f(p_0,\Rmap(p_0))+\left(\frac{\epsilon}{3}+\ell_f
  \dist{x_1}{\Rmap(p_0)}\right)\Uball\right] \\
     &=& f(p_0,\Rmap(p_0))+\sup_{x_1\in\ball{\Rmap(p_0)}{\epsilon/2\ell_f}}
   \left(\frac{\epsilon}{3}+\ell_f  \dist{x_1}{\Rmap(p_0)}\right)\Uball  \\
   &\subseteq & f(p_0,\Rmap(p_0))+\left(\frac{\epsilon}{3}+\ell_f\cdot
   \frac{\epsilon}{2\ell_f}\right)\Uball  \\
   &\subseteq& \ball{f(p_0,\Rmap(p_0))}{\epsilon}+C
   \subseteq \ball{f(p_0,\Rmap(p_0))+C}{\epsilon} ,\quad\forall
   p\in V_\epsilon\cap W_\epsilon.
\end{eqnarray*}
This inclusion shows that the set-valued mapping $p\rightrightarrows
f(p,\Rmap(p))$ is Hausdorff $C$-u.s.c. at $p_0$ and therefore so is
also its translation $\VOPmap(\cdot,x)=f(\cdot,\Rmap(\cdot))-f(\cdot,x)$ by the
continuity of $f(\cdot,x)$, for every fixed $x\in\R^n$.
Thus, one can deduce that also hypothesis (vi) of Theorem \ref{thm:congloimpfun}
is fulfilled, thereby completing the proof.
\end{proof}

\begin{remark}
It is well known that, as in the scalar case, the property of
$C$-convexity for vector-valued functions may imply Lipschitz
continuity. Several results in this sense can be found in \cite{AhTaCo16}.
Nevertheless, since hypothesis (iv) in Theorem \ref{thm:glocontief}
refers to Lipschitz continuity with the same constant on the
whole set $\R^n\backslash\Rmap(p)$, whereas all the results in
\cite{AhTaCo16} enable to achieve only local Lipschitz continuity,
such hypothesis can not be dropped out.
\end{remark}

As a remarkable consequence of the continuous dependence of ideal
efficient solutions to $(\VOP_p)$ on $p\in P$, one obtains the global
continuity behaviour of the ideal value mapping associated with
$(\VOP_p)$.

\begin{corollary}{\bf (Continuity of the ideal value)}
Given a parameterized family of problems $(\VOP_p)$, suppose that
all the hypotheses of Theorem \ref{thm:glocontief} are satisfied. Then, the
ideal value function $\val:P\longrightarrow\R^m$ is continuous
on $P$.
\end{corollary}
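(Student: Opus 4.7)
The plan is to leverage Theorem \ref{thm:glocontief} to reduce the continuity of the (a priori set-valued) evaluation $p\mapsto f(p,\IEmap(p))$ to the continuity of a single-valued composition. Specifically, Theorem \ref{thm:glocontief} yields a continuous selection $\ief:P\longrightarrow\R^n$ of $\IEmap$, and assumption $(\mathfrak{A}_4)$ guarantees that $f(p,\cdot)$ takes a constant value on $\IEmap(p)$ (this is precisely the single-valuedness of $\val$ noted in the section). Hence, for every $p\in P$, it holds
$$
   \val(p)=f(p,\ief(p)),
$$
so that the continuity of $\val$ follows from the continuity of the composed map $p\mapsto f(p,\ief(p))$.

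Next, I would fix an arbitrary $p_0\in P$ and estimate, via the triangle inequality,
$$
   \|\val(p)-\val(p_0)\|\le\|f(p,\ief(p))-f(p_0,\ief(p))\|+\|f(p_0,\ief(p))-f(p_0,\ief(p_0))\|.
$$
The first summand is handled by hypothesis (vi) of Theorem \ref{thm:glocontief}: the continuity of $f(\cdot,x)$ is assumed uniform in $x\in\R^n$, so given $\epsilon>0$ there exists a neighbourhood $V_\epsilon$ of $p_0$ such that $\|f(p,x)-f(p_0,x)\|\le\epsilon/2$ for every $p\in V_\epsilon$ and every $x\in\R^n$; in particular, this applies with $x=\ief(p)$ regardless of how $\ief(p)$ moves. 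The second summand is handled by combining the continuity of $f(p_0,\cdot)$ on $\R^n$ (hypothesis (iv)) with the continuity of the selection $\ief$ at $p_0$ (provided by Theorem \ref{thm:glocontief}): there exists a neighbourhood $W_\epsilon$ of $p_0$ such that $\|f(p_0,\ief(p))-f(p_0,\ief(p_0))\|\le\epsilon/2$ for all $p\in W_\epsilon$. Combining these on $V_\epsilon\cap W_\epsilon$ yields continuity of $\val$ at $p_0$, and arbitrariness of $p_0$ completes the argument.

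The main delicate point is that hypothesis (iv) of Theorem \ref{thm:glocontief} only provides Lipschitz continuity of $f(p,\cdot)$ on $\R^n\setminus\Rmap(p)$, not a global modulus, and hypothesis (vi) provides continuity in $p$ uniform in $x\in\R^n$ rather than joint continuity. This is why I split the increment along the two variables in that particular order: the uniform-in-$x$ continuity in $p$ dispenses with any need to control $\ief(p)$ when moving the first argument, while the plain (not necessarily Lipschitz) continuity of $f(p_0,\cdot)$ at the single point $\ief(p_0)$ suffices for the second summand, via continuity of $\ief$. No further quantitative ingredient is required, so the corollary is essentially an immediate bookkeeping consequence of Theorem \ref{thm:glocontief} and the standing structural assumptions.
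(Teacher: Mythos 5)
Your proposal is correct, and it takes a genuinely different route from the paper's own proof at the level of how the increment is decomposed. The paper writes
$$
\|\val(p)-\val(\bar p)\|\le\|f(p,\ief(p))-f(p,\ief(\bar p))\|+\|f(p,\ief(\bar p))-f(\bar p,\ief(\bar p))\|,
$$
i.e.\ it first moves the $x$-argument at the varying parameter $p$ and controls that term by the Lipschitz estimate $\ell_f\|\ief(p)-\ief(\bar p)\|$ together with the continuity of $\ief$, and then moves the $p$-argument at the fixed point $\ief(\bar p)$ using hypothesis (vi). You split in the opposite order: the $p$-increment is taken at the moving point $\ief(p)$ and absorbed by the uniformity in $x$ of hypothesis (vi), while the $x$-increment is taken at the frozen parameter $p_0$ and needs only the plain continuity of $f(p_0,\cdot)$ composed with the continuity of $\ief$. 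Your version buys something real: the paper's first summand applies the Lipschitz constant $\ell_f$ to the pair of points $\ief(p),\ief(\bar p)$, which lie in the feasible sets $\Rmap(p)$, whereas hypothesis (iv) only asserts Lipschitz continuity of $f(p,\cdot)$ on $\R^n\backslash\Rmap(p)$ — a small mismatch that your decomposition sidesteps entirely, at the price of leaning more heavily on the uniform-in-$x$ character of hypothesis (vi) (which is exactly what that hypothesis states, so the reliance is legitimate). Both arguments are elementary bookkeeping on top of Theorem \ref{thm:glocontief}; yours is marginally more faithful to the letter of the stated hypotheses.
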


\begin{proof}
According to the definition of ideal value mapping, it is $\val(p)
=f(p,x_p)$, with $x_p\in\IEmap(p)$, for every $p\in P$. Thus, let
$\ief:P\longrightarrow\R^n$ be a continuous selection of $\IEmap$,
whose existence is guaranteed by Theorem \ref{thm:glocontief}. Then, function
$\val:P\longrightarrow\R^m$ can be well defined by
$$
   \val(p)=f(p,\ief(p)),\quad\forall p\in P.
$$
It remains to prove that, via the above definition, $\val$ is continuous at each point of
$P$. So, take an arbitrary $\bar p\in P$ and fix an arbitrary $\epsilon>0$.
Since $f(\cdot,\ief(\bar p))$ is continuous at $\bar p$ owing to
hypothesis (vi), there exists a neighbourhood $U^1_\epsilon$ of
$\bar p$ such that
\begin{equation}    \label{in:valcon1}
  \|f(p,\ief(\bar p))-f(\bar p,\ief(\bar p))\|\le\frac{\epsilon}{2},
  \quad\forall p\in U^1_\epsilon.
\end{equation}
Since $\ief$ is continuous in particular at $\bar p$, there exists
a neighbourhood $U^2_\epsilon$ of $\bar p$ such that
\begin{equation}    \label{in:valcon2}
  \|\ief(p)-\ief(\bar p)\|\le\frac{\epsilon}{2\ell_f},
  \quad\forall p\in U^2_\epsilon.
\end{equation}
Thus, by setting $U_\epsilon=U^1_\epsilon\cap U^2_\epsilon$
and by taking into account hypothesis (iv) of Theorem \ref{thm:glocontief},
from inequalities (\ref{in:valcon1}) and (\ref{in:valcon2})
one obtains
\begin{eqnarray*}
  \|\val(p)-\val(\bar p)\| &=& \|f(p,\ief(p))-f(\bar p,\ief(\bar p))\| \\
   &\le &  \|f(p,\ief(p))-f(p,\ief(\bar p))\|+\|f(p,\ief(\bar p))-f(\bar p,\ief(\bar p))\|  \\
   &\le & \ell_f\|\ief(p)-\ief(\bar p))\|+\frac{\epsilon}{2}\le\epsilon,
   \quad\forall p\in U_\epsilon.
\end{eqnarray*}
By arbitrariness of $\epsilon$ and $\bar p\in P$, the above inequality
completes the proof.
\end{proof}

The next example aims at illustrating the role of the $C$-decrease
property in the topic at the issue, through an elementary situation.

\begin{example}
Consider the class of $(\VOP_p)$ problems defined by the following data:
$P=\R$ (equipped with its usual Euclidean structure), $n=1$, $m=2$, $C=\R^2_+$,
$f:\R\times\R\longrightarrow\R^2$ given by
$$
  f(p,x)=\binom{|x-\varphi(p)|}{|x-\varphi(p)|},
$$
and $\Rmap:\R\rightrightarrows\R$ given by $\Rmap(p)=\R$ (so the
problem is unconstrained) for every $p\in\R$, where $\varphi:\R\longrightarrow\R$ is
any continuous functions on $\R$.
With the above data, it is readily seen that $\IEmap:\R\rightrightarrows\R$
becomes $\IEmap(p)=\{\varphi(p)\}$ and hence $\ief:\R\longrightarrow\R$
amounts to $\ief(p)=\varphi(p)$, while $\val:\R\longrightarrow\R^2$
vanishes everywhere, namely $\val(p)=\nullv$ for every $p\in\R$.
As it is so plane, this instance of $(\VOP_p)$ can be easily checked
to fall in the range of application of Theorem \ref{thm:glocontief}.
Indeed, as a metric space $(\R,|\cdot|)$ is paracompact. Fixed $p\in\R$,
it is $f(p,\R)=\{y=(y_1,y_2)\in\R^2:\ y_1=y_2\ge 0\}$, which is closed
and $\R^2_+$-bounded, as $f(p,\R)\backslash\R^2_+=\varnothing$.
Hypothesis (iv) is satisfied because $f(p,\cdot)$ is continuous on $\R$,
while $\R\backslash\Rmap(p)=\varnothing$, so no Lipschitz continuity
with uniform constant $\ell_f$ is required.
Since each of its component is convex on $\R$, $f(p,\cdot):\R\longrightarrow\R^2$
turns out to be $\R^2_+$-convex on $\R$, for every $p\in\R$.
Hypothesis (vi) is satisfied by virtue of the continuity of $\varphi$,
on account of the definition of $f$.
Since the fulfilment of hypothesis (vii) is self evident, what
remains to show is that $\underline{\alpha}_{f,C,\Rmap}>1$.
So, fix $p\in\R$ and take an arbitrary $x\in\R\backslash\{\varphi(p)\}$.
Letting $\delta=\frac{1}{2}|x-\varphi(p)|$, take an arbitrary $ r\in
(0,\delta]$.

If $x>\varphi(p)$, then set $u=x-r\in\ball{x}{r}$ and observe that
$u>\varphi(p)$, because $x-r\ge x-\frac{1}{2}|x-\varphi(p)|=
\frac{1}{2}(x+\varphi(p))>\varphi(p)$. Thus, one obtains
\begin{eqnarray*}
  \ball{f(p,u)}{2r} &=& \binom{x-r-\varphi(p)}{x-r-\varphi(p)}+2r\Uball \\
    &\subseteq & \binom{x-\varphi(p)}{x-\varphi(p)}+r\Uball-\R^2_+
    =\ball{f(p,x)-\R^2_+}{r}.
\end{eqnarray*}
Similarly, if $x<\varphi(p)$, then set $u=x+r\in\ball{x}{r}$ and observe that
$u<\varphi(p)$, because $x+r\le x+\frac{1}{2}|x-\varphi(p)|=
\frac{1}{2}(x+\varphi(p))<\varphi(p)$. Consequently, one obtains
\begin{eqnarray*}
  \ball{f(p,u)}{2r} &=& \binom{\varphi(p)-x-r}{\varphi(p)-x-r}+2r\Uball \\
    &\subseteq & \binom{\varphi(p)-x}{\varphi(p)-x}+r\Uball-\R^2_+
    =\ball{f(p,x)-\R^2_+}{r}.
\end{eqnarray*}
In both the cases the inclusion in (\ref{in:defCdecr}) holds true.
Thus the above inclusions show that
$$
  \dec{f(p,\cdot)}{\R^2_+}{x}\ge 2,\quad\forall x\in\R\backslash\{\varphi(p)\},
$$
which leads to the estimate
$$
  \underline{\alpha}_{f,C,\Rmap}\ge 2>1.
$$
So, also hypothesis (iii) of Theorem \ref{thm:glocontief} happens to
be satisfied. In contrast, it is worth observing that
$$
  \dec{f(p,\cdot)}{\R^2_+}{\varphi(p)}=1.
$$
\end{example}

\vskip1cm



\end{document}